\let\oldphi\phi
\let\phi\varphi
\let\varphi\oldphi
\let\oldepsilon\epsilon
\let\epsilon\varepsilon
\let\varepsilon\oldepsilon
\newcommand{\mbb}[1]{\ensuremath{\mathbb{#1}}}
\newcommand{\Z}{\mbb{Z}}
\newcommand{\N}{\mbb{N}}
\newcommand{\mbf}[1]{\ensuremath{\mathbf{#1}}}
\newcommand{\Xb}{\mbf{X}}
\newcommand{\Pb}{\mbf{P}}
\newcommand{\Fb}{\mbf{F}}
\newcommand{\Gb}{\mbf{G}}
\newcommand{\Ib}{\mbf{I}}
\newcommand{\Jb}{\mbf{J}}
\newcommand{\Mb}{\mbf{M}}
\newcommand{\Nb}{\mbf{N}}
\newcommand{\Ab}{\mbf{A}}
\newcommand{\Lb}{\mbf{L}}
\newcommand{\mcal}[1]{\ensuremath{\mathcal{#1}}}
\newcommand{\Cc}{\mcal{C}}
\newcommand{\tn}[1]{\textnormal{#1}}
\let\hom\relax\newcommand{\hom}{\tn{Hom}}
\let\ker\relax\newcommand{\ker}{\tn{ker}}
\newcommand{\im}{\tn{im}}
\newcommand{\orb}{\tn{Orb}}
\newcommand{\id}{\tn{id}}
\newcommand{\rank}{\tn{rank}}
\newcommand{\lcm}{\tn{lcm}}
\newcommand{\lt}{\tn{lt}}
\newcommand{\lm}{\tn{lm}}
\newcommand{\lc}{\tn{lc}}
\newcommand{\syz}{\tn{Syz}}
\newcommand{\mon}{\tn{Mon}}
\newtheorem{thm}{\bf Theorem}[section]
\newtheorem{lem}[thm]{\bf Lemma}
\newtheorem{cor}[thm]{\bf Corollary}
\newtheorem{prop}[thm]{\bf Proposition}
\theoremstyle{definition}
\newtheorem{defn}[thm]{\bf Definition}
\newtheorem{rem}[thm]{\bf Remark}
\newtheorem{ex}[thm]{\bf Example}
\newtheorem{algo}[thm]{\bf Algorithm}
\newtheorem{proc}[thm]{\bf Procedure}
\newcommand{\la}{\langle}
\newcommand{\ra}{\rangle}
\newcommand{\FI}{\tn{FI}}
\newcommand{\OI}{\tn{OI}}
\renewcommand{\mod}{\tn{-Mod}}
\renewcommand{\l}{\ell}
\renewcommand{\th}{^{th}}
\numberwithin{equation}{section}
\newenvironment{algorithm}[1][] 
{\begin{algo}[#1]\mbox{}
\par\vspace{-0.5em}
\noindent\rule{\textwidth}{0.5pt}}
{\stepcounter{thm}
\par\vspace{-0.5em}\noindent\rule{\textwidth}{0.5pt}
\end{algo}}
\algrenewcommand\algorithmicrequire{\textbf{Input:}}
\algrenewcommand\algorithmicensure{\textbf{Output:}}
\title{Computing Gr\"obner bases and free resolutions of OI-modules}
\author{Michael Morrow}
\address{Department of Mathematics, University of Kentucky, 715 Patterson Office Tower, Lexington,
KY 40506 USA}
\email{michael.morrow@uky.edu}
\author{Uwe Nagel}
\address{Department of Mathematics, University of Kentucky, 715 Patterson Office Tower, Lexington,
KY 40506 USA}
\email{uwe.nagel@uky.edu}
\begin{document}
\begin{abstract}
Given a sequence of related modules $M_n$ defined over a sequence of related polynomial rings, one may ask how to simultaneously compute a finite Gr\"obner basis for each $M_n$. Furthermore, one may ask how to simultaneously compute the module of syzygies of each $M_n$. In this paper we address both questions. Working in the setting of OI-modules over a Noetherian polynomial OI-algebra, we provide OI-analogues of Buchberger's Criterion, Buchberger's Algorithm for computing Gr\"obner bases, and Schreyer's Theorem for computing syzygies. We also establish a stabilization result for Gr\"obner bases. 
\end{abstract}
\maketitle

\tableofcontents

%%%%%%%%%%%%%%%%%%%%%%%%%%%

\section{Introduction}
\label{section:intro}
Computing Gr\"obner bases and syzygies is a classical problem in commutative and computational  algebra. Recall that if $R$ is a Noetherian polynomial ring over a field and $F$ is a free module over $R$ of finite rank, then a finite Gr\"obner basis of any finitely generated submodule $M$ of $F$ can be computed using Buchberger's Algorithm. Furthermore, a theorem of Schreyer shows how to determine a finite Gr\"obner basis for the module of syzygies of $M$. These results can be used to give a constructive proof of Hilbert's Syzygy Theorem which asserts that $M$ has a finite free resolution. We refer the reader to \cite[Chapter 15]{Eis95} for more on this topic. In this paper we extend these results to a setting where we consider a sequence  $(M_n)_{n \in \Z_{\ge 0}}$ of related modules $M_n$ over related polynomial rings whose number of variables increases with $n$. We show that there is a finite algorithm that simultaneously computes Gr\"obner bases of all modules in the sequence. 
%results to the setting of OI-modules over OI-algebras.

Gr\"obner bases in contexts beyond Noethertian polynomial rings have been considered previously, mainly for ideals (see, e.g., \cite{BD, C,  HKL, HS}). Typically they focus on ideals in polynomial rings with infinitely many variables. However, in order to treat also modules a more abstract approach is beneficial. 
In \cite{NR19}, Nagel and R\"omer introduced OI-modules over OI-algebras, which may be viewed as a generalization of OI-modules over a fixed ring studied in \cite{SS-14}. In particular, Noetherian polynomial OI-algebras are classified in \cite{NR19}. Furthermore, it is shown that any submodule $\Mb$ of a finitely generated free OI-module $\Fb$ over a Noetherian polynomial OI-algebra $\Pb$ has a finite Gr\"obner basis. It follows that $\Fb$ is Noetherian.  The Gr\"obner basis result has also been used, for example in \cite{Nag21},  where it is shown that the equivariant Hilbert series of any finitely generated graded OI-module over $\Pb$ is rational. 

The argument for the existence of a finite Gr\"obner basis in  \cite{NR19} is not constructive. We address the question of computing such a Gr\"obner basis in \Cref{section:buchberger}. 
%first introduced the theory of Gr\"obner bases for OI-modules, where OI denotes the category of totally-ordered finite sets with order-preserving injective maps. In particular, they showed that if $\Fb$ is a free OI-module of finite rank over a Noetherian polynomial OI-algebra $\Pb$ then every finitely generated submodule of $\Fb$ has a finite Gr\"obner basis. As a corollary, one obtains that every finitely generated OI-module over $\Pb$ is Noetherian. Gr\"obner bases have also found application in \cite{Nag21} where they were used to show that the equivariant Hilbert series of a finitely generated graded OI-module over $\Pb$ is rational. 
%In Section \ref{section:buchberger} we address the question of how finite Gr\"obner bases can be computed. 
In particular, we give a criterion for determining when a subset of $\Mb$ forms a Gr\"obner basis of 
$\Mb$ (see Theorem \ref{theorem:buchbergercriterion}), and use it to describe a finite algorithm for computing a finite Gr\"obner basis (see Algorithm \ref{algorithm:buchberger}). A key ingredient for our finiteness results is a combinatorial statement about simultaneously factoring OI-morphisms (Lemma \ref{lemma:oifactorization}). Any OI-module $\Mb$ may be thought of as a sequence $(\Mb_n)_{n \in \Z_{\ge 0}}$ of $\Pb_n$-modules $\Mb_n$. It follows that our algorithm for $\Mb$ produces  a Gr\"obner basis for every $\Mb_n$. This is possible because stabilization occurs in the sense that  there is an integer $n_0$ such that the   Gr\"obner basis of $\Mb_n$ can be determined from the  Gr\"obner basis of $\Mb_{n_0}$ if $n \ge n_0$. Deciding a priori when stabilization occurs, i.e.\ estimating $n_0$, is a very interesting and open problem. We establish a condition that guarantees stabilization has occurred (see \Cref{prop:stabilization}). 

We also consider the problem of computing syzygies. Given a finite Gr\"obner basis $G$ of a submodule $\Mb$ of a finitely generated free $\Pb$-module $\Fb$, it is shown in Section \ref{section:schreyer} how to determine a finite Gr\"obner basis for the module of syzygies of $\Mb$ in terms of $G$ (see \Cref{theorem:oi-schreyers-theorem}). This may be seen as an extension of Schreyer's result that solves this problem in the classical setting. 

The finiteness results in \cite{NR19} were used to establish that a finitely generated $\Pb$-module $\Mb$ has a free resolution with finitely generated free $\Pb$-modules. However, only a few explicit resolutions are known, see \cite{FN21,FN22} for families of examples. Furthermore, if $\Mb$ is a graded OI-module it is shown in \cite{FN21}  that $\Mb$ has a graded minimal free resolution $\Fb^\bullet$.  Here minimality means that in every homological degree $d$ the rank of $\Fb^d$ is at most the rank of $\Gb^d$ in any other graded free resolution $\Gb^\bullet$ of $\Mb$. By \cite{FN21}, such a minimal free resolution is unique up to isomorphism of complexes. 
In Section \ref{section:resolution} we apply our results about Gr\"obner bases and syzygies to give a procedure for computing free resolutions. In particular, if $\Mb$ is graded, we show how to compute a graded minimal free resolution of $\Mb$ up to any chosen homological degree.

%%%%%%%%%%%%%%%%%%%%%%%%%%%%

\section{Preliminaries}
\label{section:prelim}
In this section we recall the necessary background on OI-modules and fix notation. 
Let $K$ be a Noetherian commutative ring with identity. 
%Throughout this paper, $K$ denotes a field.

\begin{defn}
Let OI be the category of totally ordered finite sets and order-preserving injective maps. An OI-\emph{algebra} over $K$ is a (covariant) functor from OI to the category of commutative, associative, unital $K$-algebras.
\end{defn}

To define a functor out of OI, it is enough to define it on the corresponding skeleton. Therefore, we regard OI as the category whose objects are intervals of the form $[n]=\{1,\ldots,n\}$ for $n\in\Z_{\geq0}$ (we put $[0]=\emptyset$) and whose morphisms are order-preserving injective maps 
$[m] \to [n]$. 

If $\Ab$ is any functor out of OI, we typically write $\Ab_n$ in place of $\Ab([n])$. We refer to $\Ab_n$ as the \emph{width $n$ component} of $\Ab$. If $\epsilon$ is an OI-morphism, we often write $\epsilon_*$ instead of $\Ab(\epsilon)$. %For any $m,n\in\Z_{\geq0}$ 
We abuse notation and write $\hom(m,n)$ for the set of OI-morphisms $\hom_{\OI}([m],[n])$ from $[m]$ to $[n]$. 

Polynomial OI-algebras were introduced in \cite[Definition 2.17]{NR19}. This paper also identifies the Noetherian algebras among these. Following the notation of \cite{NR19}, their building block is the following algebra. 

\begin{defn}
Define an OI-algebra $\Xb^{\OI,1}$ by letting
\[
\Xb^{\OI,1}_n = \begin{cases}
K&\text{ if }n=0\\
K[x_\pi\;:\;\pi\in\hom(1,n)]&\text{ if }n>0
\end{cases}
\]
and, for $\epsilon\in\hom(m,n)$, defining $\epsilon_*:\Xb^{\OI,1}_m\to\Xb^{\OI,1}_n$ via $x_\pi\mapsto x_{\epsilon\circ\pi}$. 
%A \emph{polynomial \OI-algebra} is an OI-algbera isomorphic to a pointwise tensor product $\bigotimes_{\lambda\in\Lambda}\Xb^{\OI,d_\lambda}$.
\end{defn}

By \cite[Proposition 4.8 and Theorem 6.15]{NR19}), apart from trivial direct summands,  any Noetherian polynomial OI-algebra is isomorphic to $(\Xb^{\OI,1})^{\otimes c}$ for some integer $c > 0$. Identifying each $\pi$ with its image in $[n]$, one may intuitively  think of $\Pb=(\Xb^{\OI,1})^{\otimes c}$ as a sequence $(\Pb_n)_{n \in \Z_{\ge 0}}$ of  polynomial rings, where  
\[
\Pb_n = K\begin{bmatrix}
x_{1,1}  & \cdots & x_{1,n}\\
\vdots & \ddots & \vdots \\
x_{c,1} & \cdots & x_{c,n}
\end{bmatrix}
\]
has $cn$ variables, together with $K$-algebra homomorphisms $\epsilon_* \colon \Pb_m \to \Pb_n$ induced by any 
$\epsilon\in\hom(m,n)$ and defined by $x_{i,j}\mapsto x_{i,\epsilon(j)}$.

\begin{rem}
Replacing the category OI by the category FI that has the same skeleton as OI, but whose morphisms are injective maps $[m] \to [n]$, one defines FI-algebras and polynomial FI-algebras 
$(\Xb^{\FI,1})^{\otimes c}$ analogously (see \cite[Definition 2.4]{NR19} for details). Every FI algebra is also an OI-algebra. In \cite{DEF}, FI-algebras that are finitely generated in width one are studied. These are precisely the quotients of polynomial FI algebras 
$(\Xb^{\FI,1})^{\otimes c}$. 
\end{rem}

We say an OI-algebra $\Ab$ is \emph{graded} if each $\Ab_n$ is a graded $K$-algebra and each $\epsilon_* \colon \Ab_m\to\Ab_n$ is a graded homomorphism of degree 0. Any algebra 
$\Pb=(\Xb^{\OI,1})^{\otimes c}$ is an example of  a graded OI-algebra by assigning each variable 
degree 1. Throughout this note, we always use this grading of $\Pb$.

We now turn to OI-modules over OI-algebras.

\begin{defn}[\cite{NR19}]
An OI-\emph{module} $\Mb$ over an OI-algebra $\Ab$ is a (covariant) functor from OI to the category of $K$-modules such that
\begin{enumerate}
\item each $\Mb_n$ is an $\Ab_n$-module, and
\item for each $a\in\Ab_m$ and $\epsilon\in\hom(m,n)$ we have a commuting diagram
\begin{equation}
\label{diagram:oimodule}
\xy\xymatrixrowsep{10mm}\xymatrixcolsep{10mm}
\xymatrix {
\Mb_m\ar[d]_{a\cdot}\ar[r]^{\Mb(\epsilon)} & \Mb_n\ar[d]^{\Ab(\epsilon)(a)\cdot}\\
\Mb_m\ar[r]^{\Mb(\epsilon)} & \Mb_n
}
\endxy
\end{equation}
where the vertical maps are multiplication by the indicated elements.
\end{enumerate}
We sometimes refer to $\Mb$ as an $\Ab$-\emph{module}.
\end{defn} 

Intuitively, one may think of an $\Ab$-module $\Mb$ as a sequence $(\Mb_n)_{n \in \Z_{\ge 0}}$ of $\Ab_n$-modules $\Mb_n$ together with maps $\epsilon_* \colon \Mb_m \to \Mb_n$ induced by any $\epsilon \in \hom(m, n)$. 

A \emph{homomorphism} of $\Ab$-modules %$\Mb$ and $\Nb$ 
is a natural transformation $\phi:\Mb\to\Nb$ such that each $\phi_n \colon \Mb_n\to\Nb_n$ is an $\Ab_n$-module homomorphism.  We sometimes call $\phi$ an $\Ab$-\emph{linear map}. $\Ab$-modules together with $\Ab$-linear maps form an abelian category denoted $\OI\mod(\Ab)$. Thus, all concepts such as subobject, quotient object, kernel, cokernel, injection, and surjection are defined ``width-wise'' from the corresponding concepts in the category of $K$-modules (see \cite[A.3.3]{W}). For example, the \emph{direct sum} of OI-modules $\Mb,\Nb$   is the OI-module $\Mb\oplus\Nb$ defined width-wise by
\[
(\Mb\oplus\Nb)_n=\Mb_n\oplus\Nb_n
\]
for every $n\in\Z_{\geq0}$.  
%and $\Mb$ is a submodule of $N$ if A \emph{submodule} of $\Mb$ is an OI-module $\Lb\subseteq\Mb$ that inherits an $\Ab$-module structure from $\Mb$.
%
%If $\Lb$ and $\Mb$ are $\Ab$-modules, then $\Lb\subseteq\Mb$ is defined to mean that $\Lb_n\subseteq\Mb_n$ for all $n\in\Z_{\geq0}$.
Similarly, if $\phi \colon \Mb\to\Nb$ is an $\Ab$-linear map then the kernel of $\phi$ is a submodule of $\Mb$ defined by $(\ker(\phi))_n=\ker(\phi_n)$ for $n\in\Z_{\geq0}$. The image of $\phi$ is a submodule of $\Nb$ defined analogously.
%\begin{ex}
%Let $\phi:\Mb\to\Nb$ be an $\Ab$-linear map. Then the kernel of $\phi$ is a submodule of $\Mb$ defined by $(\ker(\phi))_n=\ker(\phi_n)$ for $n\in\Z_{\geq0}$. The image of $\phi$ is a submodule of $\Nb$ in an analogous fashion.
%\end{ex}

Note that any OI-algebra $\Ab$ is trivially an OI-module over itself.  An \emph{ideal} of $\Ab$ is a submodule of $\Ab$. 

If $f\in\Mb_n$ for some $n\in\Z_{\geq0}$ then we call $f$ an \emph{element} of $\Mb$ and write $f\in\Mb$. In this case we also say that $f$ \emph{has width $n$}, denoted $w(f)=n$. 
A \emph{subset} $S$ of $\Mb$, denoted $S\subseteq\Mb$, is a subset of the disjoint union $\coprod_{n\geq0}\Mb_n$. The submodule of $\Mb$ \emph{generated by} a subset $S\subseteq\Mb$ is defined to be the smallest submodule of $\Mb$ containing $S$ and is denoted $\la S\ra_{\Mb}$. 
%If $S$ is finite we say $\la S\ra_{\Mb}$ is \emph{finitely generated by $S$}. 
For any integer $m\geq0$ we define the $m$-\emph{orbit} of $S$ as the set
\[
\orb(S,m)=\{\Mb(\epsilon)(s)\;:\;s\in S\cap\Mb_\l,\;\epsilon\in\hom(\l,m)\}\subseteq\Mb_m.
\]
Thus, for every $m\geq0$ we have
\[
(\la S\ra_{\Mb})_m=\la\orb(S,m)\ra, 
\]
where the right-hand side is generated as an $\Ab_m$-submodule of $\Mb_m$. 

\begin{ex}
   \label{exa:ideal}
Consider $\Pb = (\Xb^{\OI,1})^{\otimes 2}$ and $f = x_{1,1} x_{2,2} - x_{1,2} x_{2,1}  \in \Pb_2$. Let $\Ib$ be the ideal of $\Pb$ generated by $f$. Then $\Ib_n$ is the ideal of $\Pb_n$ that is generated by the 2-minors of a generic $2 \times n$ matrix. 

Let now $\Jb$ be the ideal of $\Pb$ that is generated by $g = x_{1,1} x_{2,2} - x_{1,2} x_{2,1}  \in \Pb_3$. Then $\Jb_n$ is equal to $\Ib_{n-1} \Pb_n$, the extension ideal of $\Ib_{n-1}$ in $\Pb_n$. In particular, we have $\Jb_2 = 0$ whereas $\Ib_2 = \langle f \rangle \neq 0$. 

Finally, let $\Lb$ be the ideal of $\Pb$ that is generated by $x_{1,1} x_{2,3} - x_{1,3} x_{2,1}  \in \Pb_3$. Then $\Lb_n$ is the ideal of $\Pb_n$ that is generated by the 2-minors of a generic $2 \times n$ matrix using columns $i, j$ with $j \ge i+2$. 
\end{ex}

We now define the most important class of OI-modules for our purposes.
\begin{defn}[\cite{NR19}]
For any   integer $d\geq0$, define an OI-module $\Fb^{\OI,d}$ over an OI-algebra $\Ab$ as follows:  For $n\in\Z_{\geq0}$ let
\[
\Fb^{\OI,d}_n=\bigoplus_{\pi\in\hom(d,n)}\Ab_n e_\pi\cong (\Ab_n)^{\binom{n}{d}}.  
\]
For $\epsilon\in\hom(m,n)$,  define $\Fb^{\OI,d}(\epsilon) \colon \Fb^{\OI,d}_m\to\Fb^{\OI,d}_n$ via $e_\pi\mapsto e_{\epsilon\circ\pi}$. 

A \emph{free} OI-module over $\Ab$ is an OI-module $\Fb$ isomorphic to a direct sum 
$\bigoplus_{\lambda\in\Lambda}\Fb^{\OI,d_\lambda}$ for integers $d_\lambda\geq0$. If $|\Lambda|=n<\infty$, then $\Fb$ is said to have \emph{rank $n$}. 

To stress the dependence on $\Ab$ we also write $\Fb^{\OI,d}_{\Ab}$  for $\Fb^{\OI,d}$. 
\end{defn}

The fact that rank is well-defined is a special case of \cite[Lemma 3.6]{FN21}. Note that $\Fb^{\OI,0}_{\Ab}$ is simply the OI-algebra $\Ab$ considered as an OI-module over itself.

\begin{rem}Let $\Fb=\bigoplus_{i=1}^s\Fb^{\OI,d_i}$ be a finitely generated,  free OI-module.
\begin{enumerate}
\item For all $n\geq0$ we have
\[
\Fb_n=\bigoplus_{\substack{\pi\in\hom(d_i,n)\\1\leq i\leq s}}\Ab_ne_{\pi,i}
\]
where the second index on $e_{\pi,i}$ is used to keep track of which summand it lives in.
\item $\Fb$ is generated as an OI-module by all $e_{\id_{[d_i]},i}$. We call these the \emph{basis elements} of $\Fb$, and any $e_{\pi,i}$ is said to be a \emph{local basis element} of $\Fb$.
\item Let $f=\sum a_ie_{\pi_i,k_i}\in\Fb_m$ and let $\epsilon\in\hom(m,n)$. By Condition \eqref{diagram:oimodule}, we have
\[
\Fb(\epsilon)(f)=\sum\epsilon_*(a_i)e_{\epsilon\circ\pi_i,k_i}.
\]
\item Any $\Ab$-linear map out of $\Fb$ is determined by the images of the elements $e_{\id_{[d_i]},i}$.
\end{enumerate}
\end{rem}

\begin{ex}
Let $\Pb=\Xb^{\OI,1}$, so that $\Pb_n=K[x_1,\ldots,x_n]$ for $n\in\Z_{\geq0}$. Consider $\Fb=\Fb^{\OI,1}_{\Pb}\oplus\Fb^{\OI,2}_{\Pb}$. Since $\hom(1,n)$ and $\hom(2,n)$ consist of $\binom{n}{1}$ and $\binom{n}{2}$ maps, respectively, we have
\begin{align*}
\Fb_0&=0,\\
\Fb_1 &= K[x_1],\\
\Fb_2 &= K[x_1,x_2]^2\oplus K[x_1,x_2]\cong K[x_1,x_2]^3,\\
\Fb_3 &= K[x_1,x_2,x_3]^3\oplus K[x_1,x_2,x_3]^3\cong K[x_1,x_2,x_3]^6, 
\end{align*}
and in general, $\Fb_n \cong K[x_1,\ldots,x_n]^{\binom{n}{1}+\binom{n}{2}}$.
\end{ex}

An OI-module $\Mb$ over a graded OI-algebra $\Ab$ is  said to be \emph{graded} if each $\Mb_m$ is a graded $\Ab_m$-module and each $\Mb(\epsilon) \colon \Mb_m\to\Mb_n$ is a graded homomorphism of degree 0. If $\Ab$ is graded, then $\Fb^{\OI,d}_{\Ab}$ is a graded module by assigning the element $e_{\id_{[d]}}$ degree $0$. 

We can alter the grading of an OI-module as follows. Given a graded OI-module $\Mb$, we define the $d\th$ \emph{twist (or shift) of} $\Mb$ to be the OI-module $\Mb(d)$ that is isomorphic to $\Mb$ as an OI-module, and whose grading is determined by
\[
[\Mb(d)_n]_j=[\Mb_n]_{d+j}.
\]
If $\Mb$ and $\Nb$ are graded $\Ab$-modules, a morphism $\phi \colon \Mb \to \Nb$ is said to be \emph{graded} if each $\phi_n$ is graded of degree 0.

%%%%%%%%%%%%%%%%%%%%%%%%%%%%%%%%%%

\section{Gr\"obner Bases and the OI-Buchberger's Algorithm}
\label{section:buchberger}

For the rest of the paper, we assume that $K$ is a field and consider OI-modules over a polynomial OI-algebra  $\Pb=(\Xb^{\OI,1})^{\otimes c}$ with some $c \ge 1$. It has been shown in \cite[Theorem 6.14]{NR19} (see \cite[Theorem 5.6]{Nag21}) that every submodule of a finitely generated free $\Pb$-module has a finite Gr\"obner basis. 
The goal of this section is to produce an algorithm that computes such a Gr\"obner basis in finitely many steps. 

We begin by recalling needed concepts. 
Let $\Fb=\bigoplus_{i=1}^s\Fb^{\OI,d_i}_{\Pb}$ be a free $\Pb$-module with basis $\{e_{\id_{[d_i]},i} \; : \; i \in [s]\}$.
%An important feature of the theory of Gr\"obner bases is that it allows us to reduce questions about modules to questions about monomial modules. 
A \emph{monomial} in $\Fb$ is an element of the form $ae_{\pi,i}$ where $a$ is a monomial in $\Pb$. We say the monomial $ae_{\pi,i}$ \emph{involves the basis element }$e_{\id_{[d_i]},i}$. The set of all monomials in $\Fb$ is denoted $\mon(\Fb)$. A \emph{monomial submodule of $\Fb$} is a submodule of $\Fb$ generated by monomials. 

We need to order the monomials in $\Fb$. 

\begin{defn}[cf. \cite{NR19,Nag21}]
\label{definition:monomialorder}
A \emph{monomial order on $\Fb$} is a total order $<$ on $\mon(\Fb)$ such that if $\mu<\nu$ for any monomials $\mu,\nu\in\Fb_m$,  one has
\begin{enumerate}
\item $\mu<a\mu<a\nu$ for any monomial $a \neq 1$ in $\Pb_m$, and
\item $\mu<\Fb(\epsilon)(\mu)<\Fb(\epsilon)(\nu)$ for every $\epsilon\in\hom(m,n)$ with $m<n$.
\end{enumerate}
\end{defn}

For example, if $c = 1$, i.e.\ $\Pb = \Xb^{\OI,1}$ and $\Fb = \Fb^{\OI, 0} \cong \Pb$ consider $x_1 e_{\pi} \in \Fb_5$ and $x_1 e_{\rho} \in \Fb_3$. Note that $\pi$ maps the empty set to $[5]$, whereas  
$\rho$ maps the empty set to $[3]$. Choose any map $\epsilon \in \hom(3,5)$  with $\epsilon (1) = 1$. Then Condition (ii) in \Cref{definition:monomialorder} gives $x_1 e_{\rho} < \Fb(\epsilon)(x_1 e_{\rho}) = x_{\epsilon (1)} e_{\epsilon \circ \rho} = x_1 e_{\pi}$. %, and so $x_1  e_{\rho} < x_1 e_{\pi}$. 

Monomial orders exist.

\begin{ex}[Lexicographic Order]
\label{example:lex}
Order the monomials in each $\Pb_m$ lexicographically with $x_{i',j'}<x_{i,j}$ if either $i'<i$ or $i'=i$ and $j'<j$. Define $e_{\rho,j}<e_{\pi,i}$ if $i<j$. For a fixed $i$, identify a monomial $e_{\pi,i}\in\Fb_m$ with a vector $(m,\pi(1),\ldots,\pi(d_i))\in\N^{d_i+1}$ and order such monomials using the lexicographic order on $\N^{d_i+1}$. Finally, for monomials $ae_{\pi,i}$ and $be_{\rho,j}$ in $\Fb$, define $be_{\rho,j}<ae_{\pi,i}$ if $e_{\rho,j}<e_{\pi,i}$ or $e_{\pi,i}=e_{\rho,j}$ and $b<a$ in $\Pb$. One checks that this gives indeed a monomial order on $\Fb$.
\end{ex}

There is a suitable generalization of Dickson's lemma in our context. Following \cite[Definition 6.1]{NR19}, a monomial $be_{\sigma,j}$ is said to be OI-\emph{divisible}  by a monomial $ae_{\pi,i}$ if $i=j$ and there is an OI-morphism $\epsilon$ such that $\Fb(\epsilon)(ae_{\pi,i})=\epsilon_*(a)e_{\epsilon\circ\pi,i}$ divides $be_{\sigma,j}$. It was shown in \cite{NR19} (see also \cite[Proposition 5.3]{Nag21}) that OI-divisibility gives a well-partial-order on $\mon(\Fb)$. Since any monomial order on $\Fb$ refines the OI-divisibility partial order on $\mon(\Fb)$ it follows that any monomial order $<$ on $\Fb$ is a  \emph{well-order} on $\mon(\Fb)$, i.e. any nonempty subset of $\mon(\Fb)$ has a unique  minimal element with respect to $<$. 

%If $\Mb$ is a monomial submodule of $\Fb$ and $\mu$ is any monomial of $\Fb$, then $\mu\in\Mb$ if and only if $\mu$ is OI-divisible by a monomial generator of $\Mb$.
%
%By \cite[Corollary 6.5]{NR19}) (see also \cite[Proposition 5.3]{Nag21}), any monomial order $<$ on $\Fb$ is a  \emph{well-order} on $\mon(\Fb)$, i.e. any nonempty subset of $\mon(\Fb)$ has a unique  minimal element with respect to $<$. % (equivalently, any decreasing chain of elements of $\mon(\Fb)$ must eventually stabilize). 
%Any monomial order refines the OI-divisibility partial order on $\mon(\Fb)$. Therefore, since OI-divisibility is a well-partial-order, one obtains that any monomial order $<$ is in fact a \emph{well-order} on $\mon(\Fb)$ (see \cite[Corollary 6.5]{NR19}), i.e. any nonempty subset of $\mon(\Fb)$ has a minimal element with respect to $<$ (equivalently, any decreasing chain of elements of $\mon(\Fb)$ must eventually stabilize).
%\par 
The following concepts are defined as in the classical situation.   

\begin{defn}
Fix a monomial order $<$ on $\Fb$. Let $0\not=f\in\Fb$ and write $f=\sum c_i\mu_i$ for nonzero $c_i\in K$ and distinct $\mu_i\in\mon(\Fb)$. The \emph{leading monomial} of $f$ is $\lm(f)=\max_{<}\{\mu_i\}$. If $\lm(f)=\mu_i$ then the \emph{leading coefficient} of $f$ is $\lc(f)=c_i$. The \emph{leading term} of $f$ is $\lt(f)=\lc(f) \cdot \lm(f)$. Given any subset $E\subseteq\Fb$ we set $\lm(E)=\{\lm(f)\;:\;f\in E\text{ is  nonzero}\}$.
\end{defn}

\begin{ex} 
   \label{rem:lead monomial}
Consider elements $a = a_1 + \cdots + a_t \in \Pb$ and $f = f_1 + \cdots + f_s \in \Fb$ with monomials $a_i \in \Pb$ and $f_j \in \Fb$.  Possibly after reindexing, assume $f_1 > f_2 > \cdots >  f_s$ and  $a_1 f_1 > a_2 f_1 > \cdots > a_t  f_1$. It follows that  $a_1 f_1> a_i f_j$ if $(i, j) \neq (1,1)$, and so $\lm (a f) = a_1 f_1 = a_1 \lm (f)$. 
\end{ex}

The properties of a monomial order on $\Fb$ imply the following useful facts. 

\begin{rem}
\label{rem:lm}
For any nonzero  $f,g\in\Fb_m$ and $\epsilon\in\hom(m,n)$, one has:
\begin{enumerate}
\item $\lm(f+g)\leq\max(\lm(f),\lm(g))$,
\item $\lm(af)=a\lm(f)$ for any monomial $a\in\Pb_m$, and
\item $\Fb(\epsilon)(\lm(f))=\lm(\Fb(\epsilon)(f))$.
\end{enumerate}
\end{rem}

We now define our primary object of study.

\begin{defn}[\cite{NR19, Nag21}]
Fix a monomial order $<$ on $\Fb$ and let $\Mb$ be a submodule of $\Fb$. A subset $G\subseteq \Mb$ is called a \emph{Gr\"obner basis} (with respect to $<$) of $\Mb$ if 
\[
\la\lm(\Mb)\ra_{\Fb}=\la\lm(G)\ra_{\Fb}.
\]
\end{defn}
Every submodule of $\Mb$ of $\Fb$ has a Gr\"obner basis by taking $G=\Mb$. We are interested in determining a finite Gr\"obner basis, which is guaranteed to exist. 
 
For the remainder of this section,  we fix a monomial order $<$ on $\Fb$. By restricting $<$ to $\Fb_n$ we obtain a monomial order on the free $\Pb_n$-module $\Fb_n$ in the classical sense (see, e.g.,\cite{Eis95}), which we denote by $<_n$. The following result relates Gr\"obner bases of OI-modules with Gr\"obner bases of their width $n$ components.

\begin{lem}
\label{lemma:widthwisegroebner}
Let $\Mb$ be a submodule of $\Fb$. For any subset $G\subseteq\Mb$, set $G_n=\orb(G,n)$ for $n\in\Z_{\geq0}$. Then $G$ is a Gr\"obner basis of $\Mb$ if and only if each $G_n$ is a Gr\"obner basis of $\Mb_n$ with respect to $<_n$.
\end{lem}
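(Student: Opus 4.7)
The plan is to reduce both sides of the claimed equivalence to widthwise equalities of $\Pb_n$-submodules of $\Fb_n$, and then to compare them directly. Since two OI-submodules of $\Fb$ coincide if and only if they coincide in every width, the condition
$\langle \lm(\Mb)\rangle_\Fb = \langle \lm(G)\rangle_\Fb$
is equivalent to
$(\langle \lm(\Mb)\rangle_\Fb)_n = (\langle \lm(G)\rangle_\Fb)_n$ for every $n\in\Z_{\ge 0}$. On the other hand, by definition $G_n=\orb(G,n)$ is a Gr\"obner basis of $\Mb_n$ with respect to $<_n$ precisely when $\langle \lm(\Mb_n)\rangle_{\Fb_n}=\langle \lm(G_n)\rangle_{\Fb_n}$. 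So it suffices to prove the two widthwise identifications
\[
(\langle \lm(\Mb)\rangle_\Fb)_n \;=\; \langle \lm(\Mb_n)\rangle_{\Fb_n}
\qquad\text{and}\qquad
(\langle \lm(G)\rangle_\Fb)_n \;=\; \langle \lm(G_n)\rangle_{\Fb_n}.
\]

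For both identities I would invoke the orbit formula recalled in the preliminaries, namely $(\langle S\rangle_\Fb)_n=\langle \orb(S,n)\rangle$ as a $\Pb_n$-submodule of $\Fb_n$. Applied to $S=\lm(\Mb)$ and $S=\lm(G)$, this reduces the task to showing $\orb(\lm(\Mb),n)$ and $\orb(\lm(G),n)$ generate the same $\Pb_n$-submodule as $\lm(\Mb_n)$ and $\lm(G_n)$, respectively. The key ingredient is Remark~3.7(iii), which guarantees $\Fb(\epsilon)(\lm(f))=\lm(\Fb(\epsilon)(f))$ for every nonzero $f$ and every OI-morphism $\epsilon$; combined with the fact that $\Fb(\epsilon)$ sends nonzero elements of a free OI-module to nonzero elements (it acts injectively on local basis elements), this gives the set equality
\[
\orb(\lm(G),n) \;=\; \{\lm(\Fb(\epsilon)(g)) : g\in G,\ \epsilon\in\hom(w(g),n)\} \;=\; \lm(\orb(G,n)) \;=\; \lm(G_n),
\]
and similarly $\orb(\lm(\Mb),n)=\lm(\Mb_n)$ using additionally that $\Mb$ is an OI-submodule of $\Fb$, so $\Fb(\epsilon)(f)\in\Mb_n$ whenever $f\in\Mb_k$ and $\epsilon\in\hom(k,n)$ (the reverse containment being immediate by taking $\epsilon=\id_{[n]}$).

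Putting these together, $(\langle \lm(\Mb)\rangle_\Fb)_n=\langle \lm(\Mb_n)\rangle_{\Fb_n}$ and $(\langle \lm(G)\rangle_\Fb)_n=\langle \lm(G_n)\rangle_{\Fb_n}$ for every $n$, which yields the desired equivalence. I do not anticipate a real obstacle here: the argument is essentially bookkeeping, and the only nontrivial input is the compatibility of leading monomials with the OI-transition maps provided by Remark~3.7(iii) together with the closure of $\Mb$ under those transition maps. If anything, the only subtle point to state carefully is the handling of the zero element when forming $\lm$, which is dealt with by the injectivity of $\Fb(\epsilon)$ on local basis elements.
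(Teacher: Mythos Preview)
Your proposal is correct and follows essentially the same approach as the paper: both arguments hinge on Remark~\ref{rem:lm}(iii), i.e.\ $\Fb(\epsilon)(\lm(f))=\lm(\Fb(\epsilon)(f))$, to identify $\orb(\lm(G),n)$ with $\lm(G_n)$ (and likewise for $\Mb$). The only difference is packaging---the paper argues elementwise via divisibility for one direction and leaves the other to the reader, whereas you establish the two widthwise set identities once and get both directions symmetrically.
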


\begin{proof}
Suppose that $G$ is a Gr\"obner basis of $\Mb$ and let $h\in\Mb_n$. Then $\lm_{<_n}(h)=\lm(h)\in\la\lm(G)\ra_{\Fb}$ so that $\lm_{<_n}(h)$ is divisible by some $\Fb(\epsilon)(\lm(g))=\lm_{<_n}(\Fb(\epsilon)(g))$ with $g\in G$ and $\epsilon\in\hom(w(g),n)$. 
As $\Fb(\epsilon)(g)$ is in $G_n$, this 
%Hence $\lm_{<_n}(h)$ is divisible by an element of $\lm_{<_n}(G_n)$ which 
shows that $\lm_{<_n}(\Mb_n)\subseteq\la\lm_{<_n}(G_n)\ra$. Thus, $G_n$ is a Gr\"obner basis for $\Mb_n$.
\par
The argument for the reverse implication is similar and  left to the reader.
\end{proof}

Combined with \cite[Lemma 2.3]{NR19}, the above observation says that, if we regard $\Mb$ as a sequence of modules $(\Mb_n)_n$, then computing a finite Gr\"obner basis $G$ of $\Mb$ is the same as ``simultaneously'' computing a Gr\"obner basis for each module $\Mb_n$ with the additional property that the Gr\"obner basis of $\Mb_n$ is obtained ``combinatorially'' by the OI-action from the Gr\"obner basis of $\Mb_w$  for each $n \ge w$, where $w$ is the maximum width of an element in $G$. 

The basis for our computational results is a division algorithm. 

\begin{defn}
   \label{def:remainder} 
Let $f\in\Fb_n$ and $B\subseteq\Fb$. An element $r\in\Fb_n$ is called a \emph{remainder of $f$ modulo $B$} if $f=\sum a_iq_i + r$ for some $q_i\in\orb(B,n)$ and $a_i\in\Pb_n$ such that
\begin{enumerate}
\item[(R1)] either $r=0$ or $\lm(r)$ is not OI-divisible by any element of $\lm(B)$,
\item[(R2)] $\lm(r)<\lm(f)$ provided $r,f\not=0$ and $r\not=f$, and
\item[(R3)] $\lm(a_iq_i)\leq\lm(f)$ whenever $a_iq_i,f\not=0$.
\end{enumerate}
\end{defn}

Every element $f \in \Fb$ has  a remainder modulo $B$. If $f$ has width $n$ such a remainder 
 can be computed by applying the classical division algorithm in $\Fb_n$  to $f$ and the set $\orb(B,n)$ with respect to the  monomial order $<_n$ on $\Fb_n$ (see, e.g, \cite[Ch. 15]{Eis95}).  

\begin{rem}
\label{rem:r3}
(i)
Note that Condition (R3) implies  $\lm(uq_i)\leq\lm(f)$ for any monomial $u$ in $a_i$, see \Cref{rem:lead monomial}.  

(ii)
An important property of remainders is that if $f\in\Fb_m$ has a remainder of zero modulo $B$ and $\epsilon\in\hom(m,n)$, then $\Fb(\epsilon)(f)$ also has a remainder of zero modulo $B$. This follows from Remark \ref{rem:lm}(iii) and Definition \ref{definition:monomialorder}(ii).
\end{rem}

We now introduce an important combinatorial result about OI-morphisms that will serve as a key ingredient for our finiteness arguments.

\begin{lem}[OI-Factorization Lemma]
\label{lemma:oifactorization}
Consider any maps $\sigma\in\hom(k_1,m)$ and $\tau\in\hom(k_2,m)$ with  $k_1,k_2,m\in\Z_{\geq0}$ and $k_1,k_2\leq m$. Set $\l=|\im(\sigma)\cup\im(\tau)|$. Then there are maps $\bar{\sigma}\in\hom(k_1,\l)$, $\bar{\tau}\in\hom(k_2,\l)$ and $\rho\in\hom(\l,m)$ such that
\[
\sigma=\rho\circ\bar{\sigma}\quad\text{and}\quad\tau=\rho\circ\bar{\tau} 
\]
as well as $\l=|\im(\bar{\sigma})\cup\im(\bar{\tau})|$. 
\end{lem}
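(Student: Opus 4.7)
The plan is to construct $\rho$, $\bar\sigma$, $\bar\tau$ directly from the union of images. First I would set $S = \im(\sigma)\cup\im(\tau) \subseteq [m]$, so that $|S| = \ell$ by definition. Since $S$ inherits a total order from $[m]$, there is a unique order-preserving bijection $[\ell] \to S$; let $\rho \in \hom(\ell, m)$ be this map viewed as an order-preserving injection into $[m]$. By construction $\im(\rho) = S \supseteq \im(\sigma)$, so for each $i \in [k_1]$ there is a unique $j \in [\ell]$ with $\rho(j) = \sigma(i)$, and I define $\bar\sigma(i) := j$. This immediately gives $\rho \circ \bar\sigma = \sigma$. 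Define $\bar\tau$ analogously.

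Next I would verify that $\bar\sigma$ and $\bar\tau$ are OI-morphisms. Injectivity is automatic: if $\bar\sigma(i_1) = \bar\sigma(i_2)$, then $\sigma(i_1) = \rho(\bar\sigma(i_1)) = \rho(\bar\sigma(i_2)) = \sigma(i_2)$, hence $i_1 = i_2$ since $\sigma$ is injective. For order-preservation, suppose $i_1 < i_2$; then $\sigma(i_1) < \sigma(i_2)$ because $\sigma$ is order-preserving, so $\rho(\bar\sigma(i_1)) < \rho(\bar\sigma(i_2))$, and since $\rho$ is order-preserving and injective this forces $\bar\sigma(i_1) < \bar\sigma(i_2)$. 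The same reasoning applies to $\bar\tau$.

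Finally, I would check the equality $\ell = |\im(\bar\sigma) \cup \im(\bar\tau)|$. Applying $\rho$ to $\im(\bar\sigma) \cup \im(\bar\tau)$ yields $\im(\sigma) \cup \im(\tau) = S = \im(\rho)$. Since $\rho$ is injective with image $S$, this means $\im(\bar\sigma) \cup \im(\bar\tau) = \rho^{-1}(S) = [\ell]$, and so its cardinality is $\ell$.

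The proof is essentially bookkeeping rather than containing a genuinely difficult step; the only place one must be careful is to note that the factored maps inherit order-preservation (not just injectivity) from $\sigma$, $\tau$, and $\rho$, which uses that $\rho$ is itself order-preserving.
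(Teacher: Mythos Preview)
Your proof is correct and takes essentially the same approach as the paper: the paper writes down the order-preserving bijection $\gamma\colon L\to[\ell]$ explicitly as $i\mapsto |L\cap[i]|$ and sets $\rho=\gamma^{-1}$, whereas you invoke the unique order-preserving bijection $[\ell]\to S$ directly, but the resulting $\rho$, $\bar\sigma$, $\bar\tau$ are identical. Your verification of order-preservation for $\bar\sigma,\bar\tau$ is a bit more explicit than the paper's one-line remark, but the argument is the same.
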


\begin{proof}
Let $L=\im(\sigma)\cup\im(\tau)$ so that $\l=|L|$. Define a map $\gamma \colon L\to[\l]$ via $i\mapsto|L\cap[i]|$. Clearly $\gamma$ is strictly increasing. Since $L$ and $[\l]$ are finite sets of the same cardinality, it follows that $\gamma$ is a bijection. Now define maps
\begin{align*}
&\bar{\sigma} \colon [k_1]\to[\l]\quad\text{by}\quad i\mapsto\gamma(\sigma(i)),\\
&\bar{\tau} \colon [k_2]\to[\l]\quad\text{by}\quad i\mapsto\gamma(\tau(i)),\\
\text{and }&\rho \colon [\l]\to[m]\quad\text{by}\quad i\mapsto\gamma^{-1}(i).
\end{align*}
Since $\gamma$, $\sigma$ and $\tau$ are strictly increasing, so are $\bar{\sigma}$, $\bar{\tau}$ and $\rho$. We have $\sigma=\rho\circ\bar{\sigma}$ and $\tau=\rho\circ\bar{\tau}$ by construction. The last claim follows from the fact that $\rho$ is injective.
\end{proof}
The classical Buchberger's Algorithm relies on a result known as Buchberger's Criterion.  It determines when a set forms a Gr\"obner basis by analyzing so-called \emph{S-polynomials}. We give the corresponding construction in the OI-setting below.

\begin{defn}
Let $ae_{\pi,i}$ and $be_{\rho,j}$ be monomials in $\Fb$. Define
\[
\lcm(ae_{\pi,i},be_{\rho,j})=\begin{cases}
\lcm(a,b)e_{\pi,i}&\text{if }e_{\pi,i}=e_{\rho,j}\\
0&\text{otherwise}.
\end{cases}
\]
Here, $\lcm(a,b)$ is the least common multiple of the monomials $a$ and $b$ in $\Pb$.
\end{defn}
\begin{defn}
Let $f,g\in\Fb_m$ be nonzero. The \emph{S-polynomial} of $f$ and $g$ is the combination
\[
S(f,g)=\frac{\lcm(\lm(f),\lm(g))}{\lt(f)}f-\frac{\lcm(\lm(f),\lm(g))}{\lt(g)}g.
\]
\end{defn}

\begin{rem}
\label{remark:cancellation}
By design, S-polynomials produce cancellation of leading terms. Specifically, one has $\lm(S(f,g))<\lcm(\lm(f),\lm(g))$ for any nonzero $f,g\in\Fb_m$.
\end{rem}

We need the following observation  whose straightforward proof is left to the reader.

\begin{lem}
\label{cor:equivariantspoly}
For any monomials $\mu,\nu\in\Fb_m$, any nonzero $f,g\in\Fb_m$ and any $\rho\in\hom(m,n)$, one has 
\[
\Pb(\rho) \big (\frac{\lcm(\mu,\nu)}{\mu}\big )=\frac{\lcm(\rho_*(\mu),\rho_* (\nu))}{\rho_* (\mu)}.
\]
%OR?
%\[
%\Pb(\rho)\left(\frac{\lcm(\mu,\nu)}{\mu}\right)=\frac{\lcm(\Fb(\rho)(\mu),\Fb(\rho)(\nu))}{\Fb(\rho)(\mu)}.
%\]
and
\[
\rho_*(S(f,g))=S(\rho_*(f), \rho_*(g)), 
\]
where $\rho_* = \Fb(\rho)$. 
%\[
%\Fb(\rho)(S(f,g))=S(\Fb(\rho)(f),\Fb(\rho)(g)).
%\]
\end{lem}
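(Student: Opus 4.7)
The plan is to establish the first identity by a case split on whether $\mu$ and $\nu$ involve the same local basis element of $\Fb$, exploiting the fact that $\rho_* = \Pb(\rho)$ sends each variable $x_{i,j}$ to $x_{i,\rho(j)}$ and that injectivity of $\rho$ forces $\rho_*$ to preserve lcms of monomials in $\Pb$. The S-polynomial identity will then follow by combining the first identity with the OI-module compatibility diagram \eqref{diagram:oimodule}, the equivariance of leading monomials recorded in \Cref{rem:lm}(iii), and the fact that $\rho_*$ fixes scalars in $K$.

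For the first identity, I would write $\mu = a e_{\pi,i}$ and $\nu = b e_{\tau,j}$ with $a,b$ monomials in $\Pb_m$. If $(\pi,i)=(\tau,j)$, then $\lcm(\mu,\nu) = \lcm(a,b)\, e_{\pi,i}$, while $\rho_*(\mu)$ and $\rho_*(\nu)$ both involve $e_{\rho\circ\pi,i}$; the claim reduces to the polynomial statement $\rho_*(\lcm(a,b)/a) = \lcm(\rho_*(a),\rho_*(b))/\rho_*(a)$, which one checks by a direct exponent-vector computation using that, by injectivity of $\rho$, the variables $x_{i,j}$ appearing in $a$ and $b$ are sent to pairwise distinct variables $x_{i,\rho(j)}$, so that $\rho_*$ commutes with lcms of polynomial monomials. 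If instead $(\pi,i)\neq(\tau,j)$, the left-hand side is $\rho_*(0) = 0$; on the right, injectivity of $\rho$ forces $\rho\circ\pi\neq\rho\circ\tau$ in the subcase $i=j$, so $\rho_*(\mu)$ and $\rho_*(\nu)$ still involve distinct basis elements and the right-hand side also vanishes.

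For the S-polynomial identity, I would first observe that \Cref{rem:lm}(iii) gives $\rho_*(\lm(f)) = \lm(\rho_*(f))$, and that the strict monotonicity in \Cref{definition:monomialorder}(ii), together with the fact that $\rho_*$ is injective on monomials, ensures that distinct monomials in the expansion of $f$ are sent to distinct monomials preserving the order, whence $\lc(\rho_*(f))=\lc(f)$ and $\rho_*(\lt(f))=\lt(\rho_*(f))$ (and similarly for $g$). Next, the commuting diagram \eqref{diagram:oimodule} yields $\rho_*(p\cdot h)=\rho_*(p)\cdot\rho_*(h)$ for $p\in\Pb_m$ and $h\in\Fb_m$. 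Expanding $\rho_*(S(f,g))$ by $K$-linearity, pulling out the scalar leading coefficients, and applying the first identity to rewrite each coefficient $\lcm(\lm(f),\lm(g))/\lt(f)$ as $\lcm(\lm(\rho_*f),\lm(\rho_*g))/\lt(\rho_*f)$ then produces exactly $S(\rho_*(f),\rho_*(g))$.

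The argument is essentially bookkeeping, and no deep obstacle presents itself. The main things to keep straight are the two different avatars of $\rho_*$ — as the algebra homomorphism $\Pb(\rho)$ on polynomial monomials versus the OI-module map $\Fb(\rho)$ on elements of $\Fb$ — and the degenerate case in which $\lcm(\mu,\nu)=0$ because $\mu$ and $\nu$ lie in different free summands of $\Fb$.
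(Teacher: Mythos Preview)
Your proposal is correct and supplies precisely the details one would write down to verify this lemma; the paper itself omits the proof entirely, calling it ``straightforward'' and leaving it to the reader. Your case split on whether $\mu$ and $\nu$ share a local basis element, together with the observation that injectivity of $\rho$ makes $\Pb(\rho)$ preserve lcms of polynomial monomials, and your use of \Cref{definition:monomialorder}(ii) to see that $\rho_*$ is injective and order-preserving on monomials (hence preserves leading coefficients), are exactly the right ingredients.
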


Central to our theory is the notion of a \emph{critical pair}, which we define as follows. 

\begin{defn}
Let $B\subseteq\Fb$. A tuple
\[
(\Fb(\sigma)(b_i),\Fb(\tau)(b_j))\in\orb(B,m)\times\orb(B,m)
\]
is called a \emph{critical pair of $B$} if $b_i, b_j \neq 0$ and  $\lm(b_i)$ and $\lm(b_j)$ involve the same basis element of $\Fb$ and $m=|\im(\sigma)\cup\im(\tau)|$. The set of all critical pairs of $B$ is denoted $\Cc(B)$.
\end{defn}

An important property of critical pairs is the fact that if $B$ is finite, then $\Cc(B)$ is finite. Indeed, using the above notation,  finiteness of $B$ implies that there are only finitely many choices for $b_i, b_j \in B$, and for each such pair there are only finitely many possible $m$ because $m=|\im(\sigma)\cup\im(\tau)|\leq w(b_i)+w(b_j)$. The sets $\hom (w(b_i), m)$ and $\hom (w(b_j), m)$ are finite. 
 
We are now ready to state and prove an OI-version of Buchberger's Criterion.   
%\emph{Buchberger's Criterion}.

\begin{thm}[OI-Buchberger's Criterion]
\label{theorem:buchbergercriterion}
Let $\Mb=\la B\ra_{\Fb}$ be the \OI-module generated by a subset $B\subseteq\Fb$. 
%Let $B\subseteq\Fb$ and let $\Mb=\la B\ra_{\Fb}$. 
Then $B$ is a Gr\"obner basis of $\Mb$ if and only if $S(f,g)$ has a remainder of zero modulo $B$ for every critical pair $(f,g)\in\Cc(B)$.
\end{thm}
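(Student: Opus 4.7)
The plan is to reduce the statement to the classical Buchberger's Criterion (applied width-wise) by invoking \Cref{lemma:widthwisegroebner}, and then to handle the passage from a pair of elements in $\orb(B,n)$ down to a genuine critical pair of $B$ by applying the OI-Factorization Lemma (\Cref{lemma:oifactorization}) together with the equivariance of $S$-polynomials (\Cref{cor:equivariantspoly}) and the stability of ``remainder zero'' under the OI-action (\Cref{rem:r3}(ii)).

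For the forward direction, suppose $B$ is a Gr\"obner basis of $\Mb$. By \Cref{lemma:widthwisegroebner}, $\orb(B,m)$ is a Gr\"obner basis of $\Mb_m$ with respect to $<_m$ for every $m$. Given a critical pair $(f,g)\in\Cc(B)$ of common width $m$, we have $S(f,g)\in\Mb_m$, so the classical division algorithm in $\Fb_m$ against $\orb(B,m)\supseteq \orb(B,m)$ produces remainder zero; this remainder is also a remainder modulo $B$ in the sense of \Cref{def:remainder}.

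For the converse, assume $S(f,g)$ has remainder zero modulo $B$ for every $(f,g)\in\Cc(B)$. By \Cref{lemma:widthwisegroebner} it suffices to fix $n$ and show that $\orb(B,n)$ is a Gr\"obner basis of $\Mb_n$ with respect to $<_n$, and by the classical Buchberger Criterion it is enough to verify that $S(f',g')$ reduces to zero modulo $\orb(B,n)$ for every pair $f'=\Fb(\sigma')(b_i)$, $g'=\Fb(\tau')(b_j)$ in $\orb(B,n)$. If $\lm(f')$ and $\lm(g')$ involve distinct basis elements of $\Fb$, then $S(f',g')=0$ and there is nothing to check, so assume they involve the same basis element. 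Apply \Cref{lemma:oifactorization} to $\sigma'$ and $\tau'$ to obtain $\ell=|\im(\sigma')\cup\im(\tau')|$, maps $\bar\sigma,\bar\tau,\rho$ with $\sigma'=\rho\circ\bar\sigma$, $\tau'=\rho\circ\bar\tau$, and $\ell=|\im(\bar\sigma)\cup\im(\bar\tau)|$. Set $f=\Fb(\bar\sigma)(b_i)$ and $g=\Fb(\bar\tau)(b_j)$. Using \Cref{rem:lm}(iii) and the injectivity of $\rho$, the leading monomials of $f$ and $g$ involve the same basis element as those of $f',g'$, so $(f,g)\in\Cc(B)$. By hypothesis $S(f,g)$ has a remainder of zero modulo $B$; by \Cref{rem:r3}(ii) so does $\Fb(\rho)(S(f,g))$, and by \Cref{cor:equivariantspoly} this equals $S(f',g')$. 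In particular, $S(f',g')$ has remainder zero modulo $\orb(B,n)$ in the classical sense, which completes the argument.

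The main obstacle is the converse direction, and the crux is not the $S$-polynomial bookkeeping but rather producing, from an arbitrary pair $(f',g')$ of elements in $\orb(B,n)$ at an uncontrolled width $n$, a genuine critical pair $(f,g)\in\Cc(B)$ living at width $\ell\le w(b_i)+w(b_j)$ that equivariantly maps onto $(f',g')$. Without the saturation condition $\ell=|\im(\bar\sigma)\cup\im(\bar\tau)|$ supplied by \Cref{lemma:oifactorization}, the set $\Cc(B)$ could be infinite and the theorem would lose its computational content; it is precisely this factorization that turns the verification into a finite check when $B$ is finite.
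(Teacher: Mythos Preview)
Your proof is correct and follows essentially the same approach as the paper; in particular, the converse direction is structurally identical, invoking \Cref{lemma:widthwisegroebner}, the classical Buchberger Criterion, \Cref{lemma:oifactorization}, \Cref{cor:equivariantspoly}, and \Cref{rem:r3}(ii) in the same order and for the same purposes. The only minor difference is in the forward direction: the paper argues directly by contradiction (a nonzero remainder $r$ of some $S(f,g)$ would lie in $\Mb$, forcing $\lm(r)\in\la\lm(B)\ra_{\Fb}$ and contradicting (R1)), whereas you route through \Cref{lemma:widthwisegroebner} and classical division in $\Fb_m$---both arguments are valid and of comparable length.
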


\begin{proof}
Suppose first that $B$ is a Gr\"obner basis of $\Mb$. Assume there is some $(f,g)\in\Cc(B)$ such that $S(f,g)$ has a nonzero remainder $r$ modulo $B$. Then $S(f,g)=q+r$ for some $q\in\Mb$.  Since $f,g\in\Mb$ we have $S(f,g)\in\Mb$. It follows that $r\in\Mb$, and so  $\lm(r) \in \la \lm(\Mb)\ra_{\Fb}=\la\lm(B)\ra_{\Fb}$. Thus,  $\lm(r)$ is OI-divisible by an element of $\lm(B)$, a contradiction to the fact that $r$ is a remainder modulo $B$.

Conversely, suppose each $S(f,g)$ with $(f,g)\in\Cc(B)$ has a remainder of zero modulo $B$. By Lemma \ref{lemma:widthwisegroebner} it suffices to show  for each $n\in\Z_{\geq0}$ that $B_n =\orb(B,n)$ is a Gr\"obner basis of $\Mb_n$ with respect to $<_n$. 
Pick $f,g\in B_n$ and write $f=\Fb(\sigma)(b_1)$ and $g=\Fb(\tau)(b_2)$ for elements $b_1,b_2\in B$ and maps $\sigma\in\hom(w(b_1),n)$ and $\tau\in\hom(w(b_2),n)$. Using the classical Buchberger's Criterion (see, e.g., \cite[Theorem 15.8]{Eis95}), we need to show that $S(f,g)$  has a remainder of zero modulo $B_n$.

Indeed, if $\lm(b_1)$ and $\lm(b_2)$ involve different basis elements of $\Fb$ then $S(f,g)$ is zero, and we are done. Assume $\lm(b_1)$ and $\lm(b_2)$ involve the same basis element of $\Fb$.  
By Lemma \ref{lemma:oifactorization}, there are maps $\bar\sigma\in\hom(w(b_1),\l)$, $\bar\tau\in\hom(w(b_2),\l)$ and $\rho\in\hom(\l,n)$ with $\l=|\im(\sigma)\cup\im(\tau)| = |\im(\bar \sigma)\cup \im(\bar \tau)|$ such that $\sigma=\rho\circ\bar{\sigma}$ and $\tau=\rho\circ\bar\tau$. 
Put \[\bar f=\Fb(\bar\sigma)(b_1)\quad\text{and}\quad\bar g=\Fb(\bar\tau)(b_2)\] and note that $(\bar f,\bar g)\in\Cc(B)$. Functoriality of $\Fb$ gives $\Fb(\rho)(\bar f)=f$ and $\Fb(\rho)(\bar g)=g$. 
By \Cref{cor:equivariantspoly}, we get 
\[
\Fb(\rho)(S(\bar f, \bar g))=S(\Fb(\rho)(\bar f),\Fb(\rho)(\bar g))=S(f,g). 
\]
Since $S(\bar f,\bar g)$ has a remainder of zero modulo $B$ by assumption, it follows that $S(f,g)$ also has a remainder of zero modulo $B$. So we can write $S(f,g)=\sum a_iq_i$ for some $a_i\in\Pb_n$ and some $q_i\in\orb(B,n)$ such that $\lm(a_iq_i)\leq\lm(S(f,g))$ whenever $a_iq_i\not=0$. As $<$ and $<_n$ agree on $\Fb_n$, this says that $S(f,g) \in \Mb_n$ has a remainder of zero modulo $B_n$ with respect to the  order $<_n$, completing the argument. 
%and hence $\orb(B,n)$ forms a Gr\"obner basis for $\Mb$ by the classical Buchberger's Criterion.
\end{proof}

The OI-Buchberger's Criterion leads to an algorithm for computing Gr\"obner bases in finite time. The idea is analogous to the classical case: check if any S-polynomial has nonzero remainder, and if so, append the remainder and repeat

% --- Third rendition
\begin{algorithm}[OI-Buchberger's Algorithm]\label{algorithm:buchberger}
\begin{algorithmic}[1]
\Require A finite subset $B\subset\Fb$
\Ensure A finite Gr\"obner basis $G$ of $\la B\ra_{\Fb}$
%\State set \emph{done} = false;
\State set $G=B$
\State set $P=\mcal{C}(G)$
\While{$P\not=\emptyset$}
\State choose $(f,g)\in P$
\State set $P= P\;\backslash\;\{(f,g)\}$
\State let $r$ be a remainder of $S(f,g)$ modulo $G$
\If{$r\not=0$}
\State set $G= G\cup\{r\}$
\State set $P=\mcal{C}(G)$
\EndIf
\EndWhile
\end{algorithmic}
\end{algorithm}

% --- Second rendition
%\begin{algorithm}[OI-Buchberger's Algorithm]\label{algorithm:buchberger}
%\begin{algorithmic}[1]
%\Require A finite subset $B\subset\Fb$
%\Ensure A finite Gr\"obner basis $G$ of $\la B\ra_{\Fb}$
%%\State set \emph{done} = false;
%\State set $G=B$;
%\While{\emph{done} = false}
%\State set $R = \emptyset$ %\emph{done} = true;
%\For  {every $(f,g)\in\Cc(G)$}
%\State let $r$ be a remainder of $S(f,g)$ modulo $G$
%\If{$r\not=0$} set $R=R\cup\{r\}$
%%\State set $R=R\cup\{r\}$;
%%\State set \emph{done} = false;
%\EndIf
%\EndFor
%\If {$R \neq \emptyset$}  set $G = G \cup R$ 
%\State  \hspace*{1cm} {\bf else} return $G$ and set \emph{done} = true
%\EndIf
%\EndWhile
%%\State \textbf{break};
%\end{algorithmic}
%\end{algorithm}

% --- First rendition
%\begin{algorithm}[OI-Buchberger's Algorithm]\label{algorithm:buchberger}
%\begin{algorithmic}[1]
%\Require A finite subset $B\subset\Fb$
%\Ensure A finite Gr\"obner basis $G$ for $\la B\ra_{\Fb}$
%\State set \emph{done} = false;
%\State set $G=B$;
%\While{\emph{done} = false}
%\State set \emph{done} = true;
%\ForAll{$(f,g)\in\Cc(G)$}
%\State let $r$ be a remainder of $S(f,g)$ modulo $G$;
%\If{$r\not=0$}
%\State set $G=G\cup\{r\}$;
%\State set \emph{done} = false;
%\State \textbf{break};
%\EndIf
%\EndFor
%\EndWhile
%\end{algorithmic}
%\end{algorithm}

\begin{proof}[Correctness and Termination of \ref{algorithm:buchberger}]
Let $G_1,G_2,\ldots$ denote the values of $G$ each time it gets updated (so in particular, $G_1=B$). Suppose the algorithm terminates and returns $G_t$. Then $S(f,g)$ has a remainder of zero modulo $G_t$ for every $(f,g)\in\mcal{C}(G_t)$, so $G_t$ forms a finite Gröbner basis for $\la G_t\ra_{\Fb}$ by the OI-Buchberger's Criterion. For any $t$, if $(f,g)\in\mcal{C}(G_t)$ and $S(f,g)$ has a remainder of $r\not=0$ modulo $G_t$, then $r\in\la G_t\ra_{\Fb}$, and so $\la G_t\ra_{\Fb}=\la G_{t+1}\ra_{\Fb}$. It follows that $\la G_t\ra_{\Fb}=\la B\ra_{\Fb}$ and the algorithm is correct. \par Suppose now that the algorithm does not terminate. Since each $\mcal{C}(G_n)$ is finite we obtain an infinite sequence $G_1\subseteq G_2\subseteq\cdots$. Let $n\in\N$ and let $r\not=0$ be the remainder modulo $G_n$ used to form $G_{n+1}$. By the definition of remainder, we have $\lm(r)\not\in\la\lm(G_n)\ra_{\Fb}$, and hence $\la\lm(G_n)\ra_{\Fb}\subsetneq\la\lm(G_{n+1})\ra_{\Fb}$. This yields a strictly increasing chain of submodules of $\Fb$, contradicting the fact that $\Fb$ is Noetherian (see \cite[Proposition 4.2 and Theorem 6.15]{NR19}). Hence the algorithm terminates in finite time.
\end{proof}

The main result of this section follows easily now. 

\begin{thm}
   \label{thm:finite compute G}
Every submodule of a finitely generated free \OI-module $\Fb$ over 
$\Pb \cong (\Xb^{\OI,1})^{\otimes c}$  has a finite Gr\"obner basis with respect to any monomial order on $\Fb$ that can be computed in finitely many steps. 
\end{thm}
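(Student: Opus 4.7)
The plan is to deduce \Cref{thm:finite compute G} directly from the OI-Buchberger's Algorithm (\Cref{algorithm:buchberger}) together with its correctness and termination proof. The only thing that is not immediately supplied by the algorithm is that the input $B$ exists as a \emph{finite} generating set for an arbitrary submodule $\Mb$ of $\Fb$; I would handle that first.

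First, I would invoke \cite[Proposition 4.2 and Theorem 6.15]{NR19} (already cited in the termination argument above) to observe that $\Fb$ is a Noetherian $\Pb$-module, so every submodule $\Mb \subseteq \Fb$ is finitely generated as an OI-module. Choose any finite subset $B \subseteq \Mb$ with $\Mb = \la B \ra_{\Fb}$. Next, I would feed $B$ into \Cref{algorithm:buchberger}. By the correctness and termination argument given immediately above, the algorithm halts after finitely many steps and returns a finite set $G \subseteq \Fb$ with $\la G \ra_{\Fb} = \la B \ra_{\Fb} = \Mb$ such that $S(f,g)$ has remainder zero modulo $G$ for every critical pair $(f,g) \in \Cc(G)$. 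Applying the OI-Buchberger's Criterion (\Cref{theorem:buchbergercriterion}) to $G$ then yields that $G$ is a Gr\"obner basis of $\Mb$ with respect to the chosen monomial order.

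Since the monomial order $<$ on $\Fb$ was arbitrary and the algorithm only uses $<$ through the division procedure defining remainders (which is available for any monomial order, as noted after \Cref{def:remainder}), the conclusion holds for every monomial order on $\Fb$.

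There is really no serious obstacle here: all the substantive content, namely the OI-Factorization Lemma, the OI-Buchberger's Criterion, and the termination argument based on the Noetherianity of $\Fb$, has already been established. The theorem is essentially a clean packaging statement, and the main thing to be careful about is to make explicit the reduction from ``arbitrary submodule'' to ``finitely generated input for the algorithm'' via Noetherianity, since the algorithm itself is only stated for finite input sets $B$.
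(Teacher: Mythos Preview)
Your proposal is correct and follows essentially the same approach as the paper: invoke Noetherianity of $\Fb$ from \cite{NR19} to obtain a finite generating set, then apply \Cref{algorithm:buchberger} (with its already-established correctness and termination). The paper's proof is a two-sentence version of exactly this argument.
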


\begin{proof}
By \cite[Theorem 6.15]{NR19}, any submodule of $\Fb$ is finitely generated. Thus, we conclude by applying \Cref{algorithm:buchberger}.  
\end{proof}

The first author has implemented \Cref{algorithm:buchberger} using Macaulay2 (see \cite{M}). 

A finite Gr\"obner basis $B$ of an OI-module $\Mb$ is said to be \emph{minimal} if every proper subset of $B$ is no longer a Gr\"obner basis $B$. 

\begin{ex}
Let $\Fb=\Fb^{\OI,1}\oplus\Fb^{\OI,1}\oplus\Fb^{\OI,2}$ have basis $\{e_{\id_{[1]},1},e_{\id_{[1]},2},e_{\id_{[2]},3}\}$. Suppose $c=2$ so $\Pb$ has two rows of variables, and let
\[
B=\{x_{1,1}e_{\id_{[1]},1}+x_{2,1}e_{\id_{[1]},2},\;x_{1,2}x_{1,1}e_{\pi,2}+x_{2,2}x_{2,1}e_{\id_{[2]},3}\}
\]
where $\pi:[1]\to[2]$ is given by $1\mapsto 2$.  So, the elements of $B$ have width one and two, respectively. 
Using the Macaulay2 code of \cite{M}, one obtains  a minimal Gr\"obner basis for $\la B\ra_{\Fb}$ with respect to the lex order from Example \ref{example:lex}: 
\begin{align*}
x_{1,1}e_{\id_{[1]},1}+x_{2,1}e_{\id_{[1]},2}&\in\Fb_1\\
x_{1,2}x_{1,1}e_{\pi,2}+x_{2,2}x_{2,1}e_{\id_{[2]},3}&\in\Fb_2\\
-x_{2,3}x_{2,2}x_{1,1}e_{\sigma_1,3}+x_{2,3}x_{2,1}x_{1,2}e_{\sigma_2,3}&\in\Fb_3
\end{align*}
where $\sigma_1:[2]\to[3]$ is given by $1\mapsto 2$ and $2\mapsto 3$ and $\sigma_2:[2]\to[3]$ is given by $1\mapsto 1$ and $2\mapsto 3$.  In particular,  the OI-Buchberger's Algorithm needed to append exactly one more element of width $3$ before terminating.
\end{ex}

Given a submodule $\Mb$ of $\Fb$, the fact that $\Mb$ has a finite Gr\"obner basis $G$ combined with \Cref{lemma:widthwisegroebner} shows in particular that a Gr\"obner basis of $\Mb$ can be obtained as the union of Gr\"obner bases of $\Mb_n$ with respect to the order $<_n$ with $n \le w$, where $w$ is the maximum width of an element in $G$. This suggests an alternative way for computing a Gr\"obner basis of $\Mb$ by determining and comparing Gr\"obner bases of $\Mb_n$ with respect to the order $<_n$ for sufficiently large $n$. The problem is that a priori one does not know when to stop, i.e., what is large enough. The following result gives a sufficient condition for the desired stabilization. 

\begin{prop}
   \label{prop:stabilization} 
Let $\Mb$ be a submodule of a free $\Pb$-module $\Fb$ generated by a finite set $B \subset \Fb$. Let $<$ be any monomial order on $\Fb$. For any integer $m \ge 0$ , denote by $<_m$ the restriction of $<$ to $\Fb_m$. 
Let $W \ge 1$ denote the maximum width of an element  in $B$. Then $B$ is a Gr\"obner basis of $\Mb$ with respect to $<$ if and only if $\orb(B,m)$ is a Gr\"obner basis for $\Mb_m$ with respect to $<_m$ for each integer $m \le 2W$.
\end{prop}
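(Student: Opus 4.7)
The forward implication is immediate from \Cref{lemma:widthwisegroebner}, which asserts the equivalence for \emph{all} $m \ge 0$. The content lies in the converse, so I assume $\orb(B,m)$ is a Gr\"obner basis of $\Mb_m$ with respect to $<_m$ for every $m \le 2W$, and aim to conclude via the \OI-Buchberger's Criterion (\Cref{theorem:buchbergercriterion}) that $B$ is a Gr\"obner basis of $\Mb$.

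The crucial numerical observation is the width bound on critical pairs. If $(f,g) = (\Fb(\sigma)(b_i),\Fb(\tau)(b_j)) \in \Cc(B)$, then by definition $f,g \in \Fb_m$ where
\[
m \;=\; |\im(\sigma)\cup\im(\tau)| \;\le\; w(b_i)+w(b_j) \;\le\; 2W.
\]
Thus every $S$-polynomial arising from a critical pair of $B$ lives in some $\Fb_m$ with $m \le 2W$, which is precisely the range where we have hypotheses to work with.

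Fix such a critical pair $(f,g)$ of width $m \le 2W$. Since $f,g \in \orb(B,m) \subseteq \Mb_m$, also $S(f,g) \in \Mb_m$. By assumption $\orb(B,m)$ is a Gr\"obner basis of $\Mb_m$ with respect to $<_m$, so the classical division algorithm applied to $S(f,g)$ and $\orb(B,m)$ yields $S(f,g) = \sum a_i q_i$ with $q_i \in \orb(B,m)$, $a_i \in \Pb_m$, and $\lm_{<_m}(a_i q_i) \le \lm_{<_m}(S(f,g))$. I will then verify that the resulting expression is a remainder of zero modulo $B$ in the \OI-sense of \Cref{def:remainder}: conditions (R2) and (R3) transfer verbatim because $<$ restricts to $<_m$ on $\Fb_m$, and (R1) is vacuous when the remainder is zero. (For completeness, I will also note that for a nonzero element $r \in \Fb_m$, OI-divisibility of $\lm(r)$ by some element of $\lm(B)$ is equivalent to classical divisibility of $\lm(r)$ by some element of $\lm(\orb(B,m))$, so the two notions of remainder genuinely agree in fixed width.)

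Having produced an \OI-remainder of zero for the $S$-polynomial of every critical pair of $B$, the \OI-Buchberger's Criterion concludes that $B$ is a Gr\"obner basis of $\Mb$. The heart of the argument is the simple combinatorial bound $|\im(\sigma)\cup\im(\tau)| \le w(b_i)+w(b_j)$, which turns the a priori infinite condition of the \OI-Buchberger's Criterion into one that can be certified by checking finitely many widths; no step should present a substantive obstacle beyond carefully matching the \OI-remainder definition against the classical one in fixed width.
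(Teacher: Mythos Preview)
Your proposal is correct and follows essentially the same route as the paper: bound the width of every critical pair by $2W$ via $|\im(\sigma)\cup\im(\tau)|\le w(b_i)+w(b_j)$, use the hypothesis in that width to obtain a classical remainder of zero for $S(f,g)$ modulo $\orb(B,m)$, observe that this is an \OI-remainder of zero modulo $B$ since $<$ restricts to $<_m$, and conclude by the \OI-Buchberger's Criterion. Your explicit check of (R1)--(R3) is a bit more detailed than the paper's treatment, but the argument is the same.
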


\begin{proof}
The forward direction is immediate from Lemma 3.6. For the reverse, by the OI-Buchberger's Criterion, it suffices  to show  for any critical pair 
$(f,g) \in \Cc(B)$ that $S(f,g)$ has a remainder of zero modulo $B$. Considering such a critical pair, we can write $(f,g)=(\Fb(\sigma)(b_1),\Fb(\tau)(b_2))$ for maps $\sigma$ and $\tau$ and elements $b_1,b_2\in B$ such that $\sigma,\tau$ land in width $m=|\im(\sigma)\cup\im(\tau)|\leq w(b_1)+w(b_2)\leq 2W$. 
Hence our hypothesis gives that $\orb(B,m)$ is a Gr\"obner basis for $\Mb_m$,  and so  $S(f,g)$ has a remainder of zero modulo $\orb(B,m)$ with respect to $<_m$. But since $<$ and $<_m$ agree on $\Mb_m$, this says that $S(f,g)$ has a remainder of zero modulo $B$ with respect to $<$. 
\end{proof}

%%%%%%%%%%%%%%%%%%%

\section{Syzygies and the OI-Schreyer's Theorem}
\label{section:schreyer}
Recall that in the classical setting, if $M$ is a finitely generated submodule of a free module of finite rank over a Noetherian polynomial ring, Schreyer's Theorem \cite[Theorem 15.10]{Eis95} computes a finite Gr\"obner basis for the module of syzygies of $M$. Using the theory developed in the previous section, we extend Schreyer's result to the setting of OI-modules. 

We continue to use the above notation. In particular,  $\Fb=\bigoplus_{i=1}^s\Fb^{\OI,d_i}_{\Pb}$  always denotes a finitely generated, free OI-module over $\Pb=(\Xb^{\OI,1})^{\otimes c}$ with some $c \ge 1$.  We fix a monomial order $<$ on $\Fb$. 

Our computation of syzygies of a submodule $\Mb$ of $\Fb$ depends on a choice of a generating set of $\Mb$.

\begin{defn} 
   \label{def:phi}
Let $B=\{b_1,\ldots,b_t\}\subset\Fb$ and let $\Gb=\bigoplus_{i=1}^t\Fb^{\OI,w(b_i)}_{\Pb}$ be a free OI-module with basis $\{\varepsilon_{\id_{[w(b_i)]},i} \ : \ i \in [t]\}$. We define the \emph{module of syzygies of $B$} as $\syz(B)=\ker(\phi)$, where $\phi$ denotes the surjective morphism 
\[
\phi \colon \Gb = \bigoplus_{i=1}^t \langle  \varepsilon_{\id_{[w(b_i)]},i} \rangle_{\Fb}   \to\la B\ra_{\Fb}, \text{ defined by $\varepsilon_{\id_{[w(b_i)]},i}\mapsto b_i$}. 
\]
%(see \cite[Proposition 3.19]{NR19}). 
\end{defn}

Our goal in this section is to compute a finite Gr\"obner basis for $\syz(B)$. To begin, we will need a suitable monomial order on $\Gb$. 

\begin{defn}
Let $B=\{b_1,\ldots,b_t\}\subset\Fb$.
\begin{enumerate}
\item For any $i \in [t]$, order the set $\{  \hom(w(b_i), n) \, : \, n \ge w(b_i) \}$:  Given $\pi\in\hom(w(b_i),m)$ and $\rho\in\hom(w(b_i),n)$, define $\pi<\rho$ if 
 $(m,\pi(1),\ldots,\pi(w(b_i)))<(n,\rho(1),\ldots,\rho(w(b_i)))$ in the lexicographic order on $\N^{w(b_i)+1}$, where $\N$ is ordered by $1 < 2 < \cdots$.

%as follows. First, given $\pi\in\hom(w(b_i),m)$ and $\rho\in\hom(w(b_i),n)$ we set $\pi<\rho$ provided $(m,\pi(1),\ldots,\pi(w(b_i)))<(n,\rho(1),\ldots,\rho(w(b_i)))$ in the lex order on $\N^{w(b_i)+1}$. 

\item  Let $\prec_B$ be the total order on the union of the above sets
\[
\{(\pi,i)\;:\;i\in[t],\;\pi\in\hom(w(b_i),m),\; m\geq w(b_i)\}
\]
defined by $(\pi,i)\prec_B(\rho,j)$ if either $i<j$ or $i=j$ and $\pi<\rho$.

\item Define a total order $<_B$ on $\mon(\Gb)$ by setting $a\varepsilon_{\pi,i}<_Bb\varepsilon_{\rho,j}$ if either $ \lm(\phi(a\varepsilon_{\pi,i}))<\lm(\phi(b\varepsilon_{\rho,j}))$ or $\lm(\phi(a\varepsilon_{\pi,i}))=\lm(\phi(b\varepsilon_{\rho,j}))$ and $(\rho,j)\prec_B(\pi,i)$.  

\end{enumerate}
\end{defn}

\begin{rem}
Note that for any monomial $a\varepsilon_{\pi,i}\in\mon(\Gb)$, one has
\[
\lm(\phi(a\varepsilon_{\pi,i}))=\lm(a\Fb(\pi)(b_i))=a\Fb(\pi)(\lm(b_i)) = a \pi_* (\lm (b_i)).
\]
Thus, Part (iii) can be  more explicitly re-stated  as  $a\varepsilon_{\pi,i}<_Bb\varepsilon_{\rho,j}$  if either $a \pi_* (\lm (b_i)) < b \rho_* (\lm (b_j))$ or $a \pi_* (\lm (b_i)) = b \rho_* (\lm (b_j))$ and $(\rho,j)\prec_B(\pi,i)$.
\end{rem}

 Together with the fact that $<$ is a monomial order on $\Fb$, the following observation implies that $<_B$ is a monomial order on $\Gb$.

\begin{lem}
\label{lemma:equivariant-indices}
Consider any maps   $\pi\in\hom(w(b_i),m)$ and $\rho\in\hom(w(b_j),m)$ with $i,j\in[t]$ and 
$\sigma\in\hom(m,n)$. Then $(\sigma\circ\pi,i)\prec_B(\sigma\circ\rho,j)$ if and only if $(\pi,i)\prec_B(\rho,j)$.
\end{lem}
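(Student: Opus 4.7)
The plan is to split into cases based on how $i$ and $j$ compare, and in the non-trivial case reduce the claim to the fact that $\sigma$ is strictly increasing.

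First, I would unwind the definition of $\prec_B$. Since $(\pi,i) \prec_B (\rho,j)$ holds precisely when either $i<j$, or else $i=j$ and $\pi<\rho$ in the ordering of part (i), and since composing $\pi$ and $\rho$ with $\sigma$ does not alter the second coordinate, the cases $i<j$ and $i>j$ are both immediate: the claim reduces to $i<j \Leftrightarrow i<j$ (respectively $i>j \Leftrightarrow i>j$).  So the only interesting case is $i=j$, where the lemma becomes: $\sigma\circ\pi < \sigma\circ\rho$ if and only if $\pi<\rho$, with both $\pi,\rho \in \hom(w(b_i),m)$ and $\sigma\circ\pi,\sigma\circ\rho \in \hom(w(b_i),n)$.

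Next, I would expand the definition of $<$ from part (i). By definition, $\pi<\rho$ means $(m,\pi(1),\ldots,\pi(w(b_i))) < (m,\rho(1),\ldots,\rho(w(b_i)))$ lexicographically; since the first coordinate is $m$ in both tuples, this collapses to a lexicographic comparison of $(\pi(1),\ldots,\pi(w(b_i)))$ against $(\rho(1),\ldots,\rho(w(b_i)))$. Similarly, $\sigma\circ\pi < \sigma\circ\rho$ reduces to the lexicographic comparison of $(\sigma(\pi(1)),\ldots,\sigma(\pi(w(b_i))))$ against $(\sigma(\rho(1)),\ldots,\sigma(\rho(w(b_i))))$, as both have first coordinate $n$.

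Finally, I would use the fact that $\sigma$ is an OI-morphism, hence strictly increasing, so $\sigma(a)<\sigma(b) \Leftrightarrow a<b$ and $\sigma(a)=\sigma(b) \Leftrightarrow a=b$ for $a,b \in [m]$. Applying this coordinate by coordinate, the lex comparison of $(\sigma(\pi(1)),\ldots,\sigma(\pi(w(b_i))))$ with $(\sigma(\rho(1)),\ldots,\sigma(\rho(w(b_i))))$ yields the same outcome as the lex comparison of $(\pi(1),\ldots,\pi(w(b_i)))$ with $(\rho(1),\ldots,\rho(w(b_i)))$. This gives $\sigma\circ\pi < \sigma\circ\rho \Leftrightarrow \pi<\rho$, completing the equivalence.

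There is no real obstacle here — the proof is a direct unwinding of definitions combined with the fact that order-preserving injections preserve lex comparisons. The only point worth being careful about is that the first coordinate of the tuple used to define $<$ is the target width, not the index; it is precisely because $\pi$ and $\rho$ (respectively $\sigma\circ\pi$ and $\sigma\circ\rho$) share a common target that these first coordinates cancel and the comparison truly reduces to the images.
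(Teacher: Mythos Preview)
Your proof is correct and follows the same approach as the paper, which simply states that the claim follows from $\sigma$ being strictly increasing. You have carefully unpacked exactly why that fact suffices.
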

\begin{proof}
This follows from the fact that $\sigma$ is a strictly increasing map.
\end{proof}

For the remainder of this section we assume that $B=\{b_1,\ldots,b_t\}$ is a Gr\"obner basis of $\la B\ra_{\Fb}$ and that each $b_i$ is monic, possibly after multiplying by a suitable element of $K$.  We need some further notation. 

For any $i,j\in[t]$, $\sigma\in\hom(w(b_i),m)$ and $\tau\in\hom(w(b_j),m)$ with $m\geq\max(w(b_i),w(b_j))$,  the remainder of $S(\Fb(\sigma)(b_i),\Fb(\tau)(b_j))$ modulo $B$ is zero since $B$ is a Gr\"obner basis. Hence the division algorithm gives an expression 
\[
S(\Fb(\sigma)(b_i),\Fb(\tau)(b_j))=\sum_\l a_{i,j,\l}^{\sigma,\tau}\Fb(\pi_{i,j,\l}^{\sigma,\tau})(b_{k_{i,j,\l}^{\sigma,\tau}})
\]
with $a_{i,j,\l}^{\sigma,\tau}\in\Pb_m$ and $\Fb(\pi_{i,j,\l}^{\sigma,\tau})(b_{k_{i,j,\l}^{\sigma,\tau}})\in\orb(B,m)$, where
\begin{equation}
\label{equation:lm-inequality}
\lm(u\Fb(\pi_{i,j,\l}^{\sigma,\tau})(b_{k_{i,j,\l}^{\sigma,\tau}}))\leq\lm(S(\Fb(\sigma)(b_i),\Fb(\tau)(b_j)))
\end{equation}
for any monomial $u$ in $a_{i,j,\l}^{\sigma,\tau}$ (see Remark \ref{rem:r3}). Define
\begin{equation}
\label{eq:syz}
s_{i,j}^{\sigma,\tau}=m_{i,j}^{\sigma,\tau}\varepsilon_{\sigma,i}-m_{j,i}^{\tau,\sigma}\varepsilon_{\tau,j}-\sum_\l a_{i,j,\l}^{\sigma,\tau}\varepsilon_{\pi_{i,j,\l}^{\sigma,\tau},k_{i,j,\l}^{\sigma,\tau}}\in\Gb_m
\end{equation}
where
\[
m_{i,j}^{\sigma,\tau}=\frac{\lcm(\Fb(\sigma)(\lm(b_i)),\Fb(\tau)(\lm(b_j)))}{\Fb(\sigma)(\lm(b_i))}\in\Pb_m.
\]

We need one more preparatory observation. 

\begin{lem}
\label{lemma:lm-syzygy}
If $(\sigma,i)\prec_B(\tau,j)$ then $\lm_{<_B}(s_{i,j}^{\sigma,\tau})=m_{i,j}^{\sigma,\tau}\varepsilon_{\sigma,i}$.
\end{lem}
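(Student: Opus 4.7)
The plan is to bound the $<_B$-value of every monomial appearing in $s_{i,j}^{\sigma,\tau}$ against $m_{i,j}^{\sigma,\tau}\varepsilon_{\sigma,i}$, using the two-tier nature of $<_B$: first compare $\phi$-leading monomials, then break ties via $\prec_B$.

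First I would compute the $\phi$-leading monomials of the two ``cancellation'' terms. By definition of $m_{i,j}^{\sigma,\tau}$ (and similarly for $m_{j,i}^{\tau,\sigma}$), and using monicity of $b_i,b_j$ together with \Cref{rem:lm}(ii),(iii),
\[
\lm(\phi(m_{i,j}^{\sigma,\tau}\varepsilon_{\sigma,i}))=m_{i,j}^{\sigma,\tau}\sigma_*(\lm(b_i))=\lcm(\sigma_*(\lm(b_i)),\tau_*(\lm(b_j))),
\]
and the same expression arises for $\lm(\phi(m_{j,i}^{\tau,\sigma}\varepsilon_{\tau,j}))$. So the $\phi$-leading monomials of the first two summands in \eqref{eq:syz} coincide. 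Thus the tie is broken by $\prec_B$: since we assume $(\sigma,i)\prec_B(\tau,j)$, the definition of $<_B$ yields
\[
m_{j,i}^{\tau,\sigma}\varepsilon_{\tau,j}\;<_B\;m_{i,j}^{\sigma,\tau}\varepsilon_{\sigma,i}.
\]

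Next I would deal with the terms in the sum. Fix an index $\l$ and let $u$ be any monomial appearing in $a_{i,j,\l}^{\sigma,\tau}$. Because $b_{k_{i,j,\l}^{\sigma,\tau}}$ is monic,
\[
\lm(\phi(u\varepsilon_{\pi_{i,j,\l}^{\sigma,\tau},k_{i,j,\l}^{\sigma,\tau}}))=u\,(\pi_{i,j,\l}^{\sigma,\tau})_*(\lm(b_{k_{i,j,\l}^{\sigma,\tau}}))=\lm\bigl(u\,\Fb(\pi_{i,j,\l}^{\sigma,\tau})(b_{k_{i,j,\l}^{\sigma,\tau}})\bigr).
\]
By \eqref{equation:lm-inequality}, this is bounded above by $\lm(S(\Fb(\sigma)(b_i),\Fb(\tau)(b_j)))$, which in turn is strictly smaller than $\lcm(\sigma_*(\lm(b_i)),\tau_*(\lm(b_j)))$ by \Cref{remark:cancellation}. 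Hence the $\phi$-leading monomial of every such $u\varepsilon_{\pi_{i,j,\l}^{\sigma,\tau},k_{i,j,\l}^{\sigma,\tau}}$ is strictly less than that of $m_{i,j}^{\sigma,\tau}\varepsilon_{\sigma,i}$, so by the first clause in the definition of $<_B$ each such monomial is strictly $<_B$-smaller than $m_{i,j}^{\sigma,\tau}\varepsilon_{\sigma,i}$.

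Combining the two steps, every monomial of $s_{i,j}^{\sigma,\tau}$ other than $m_{i,j}^{\sigma,\tau}\varepsilon_{\sigma,i}$ is strictly $<_B$-smaller than it, which is the claim. The whole argument is essentially bookkeeping; the only mildly delicate point, which I would state explicitly, is that \eqref{equation:lm-inequality} controls \emph{every} monomial of $a_{i,j,\l}^{\sigma,\tau}$ and not just its leading term, so no hidden cancellation in $\Gb$ can raise the leading monomial of the sum above $m_{i,j}^{\sigma,\tau}\varepsilon_{\sigma,i}$.
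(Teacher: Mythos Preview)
Your proof is correct and follows essentially the same approach as the paper: compute that the $\phi$-leading monomials of $m_{i,j}^{\sigma,\tau}\varepsilon_{\sigma,i}$ and $m_{j,i}^{\tau,\sigma}\varepsilon_{\tau,j}$ coincide and break the tie via $\prec_B$, then use \eqref{equation:lm-inequality} together with \Cref{remark:cancellation} to bound the remaining terms strictly below. The only cosmetic difference is that the paper singles out the one monomial $u$ of $a_{i,j,\l}^{\sigma,\tau}$ giving the $<_B$-leading monomial of that summand, whereas you bound every monomial of $a_{i,j,\l}^{\sigma,\tau}$; both are justified by the ``for any monomial $u$'' clause in \eqref{equation:lm-inequality}.
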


\begin{proof}
Note that $\phi (m_{i,j}^{\sigma,\tau}\varepsilon_{\sigma,i}) = 
\frac{\lcm (\sigma_* (\lm (b_i)), \tau_* (\lm (b_j)))}{\sigma_* (\lm (b_i))} \sigma_* (b_i)$, and so its leading 
monomial is  $\lcm (\sigma_* (\lm (b_i)), \tau_* (\lm (b_j)))$. Analogously, we see that $\phi(m_{j,i}^{\tau,\sigma}\varepsilon_{\tau,j})$ has the same leading monomial. Hence  $(\sigma,i)\prec_B(\tau,j)$ implies $m_{j,i}^{\tau,\sigma}\varepsilon_{\tau,j}<_B m_{i,j}^{\sigma,\tau}\varepsilon_{\sigma,i}$ by the definition of $<_B$. Now fix some $\l$ and let $u$ be the  monomial in $a_{i,j,\l}^{\sigma,\tau}$ such that 
\[
\lm_{<_B}(a_{i,j,\l}^{\sigma,\tau}\varepsilon_{\pi_{i,j,\l}^{\sigma,\tau},k_{i,j,\l}^{\sigma,\tau}})=u\varepsilon_{\pi_{i,j,\l}^{\sigma,\tau},k_{i,j,\l}^{\sigma,\tau}}. 
\]
Then we obtain
\begin{align*}
\lm(u\Fb(\pi_{i,j,\l}^{\sigma,\tau})(b_{k_{i,j,\l}^{\sigma,\tau}}))
&\leq\lm(S(\Fb(\sigma)(b_i),\Fb(\tau)(b_j)))\tag{by \eqref{equation:lm-inequality}}\\
&<\lcm(\Fb(\sigma)(\lm(b_i)),\Fb(\tau)(\lm(b_j)))\tag{by Remark \ref{remark:cancellation}}\\
&=m_{i,j}^{\sigma,\tau}\Fb(\sigma)(\lm(b_i)). 
\end{align*}
By definition of $<_B$, this gives $u\varepsilon_{\pi_{i,j,\l}^{\sigma,\tau},k_{i,j,\l}^{\sigma,\tau}}<_B m_{i,j}^{\sigma,\tau}\varepsilon_{\sigma,i}$ , and the claim follows.
\end{proof}

We are now ready to state and prove the main result of this section.  %OI-\emph{Schreyer's Theorem}.

\begin{thm}[OI-Schreyer's Theorem]
\label{theorem:oi-schreyers-theorem}
The $s_{i,j}^{\sigma,\tau}$ with $(\Fb(\sigma)(b_i),\Fb(\tau)(b_j))\in\Cc(B)$ and $(\sigma,i)\prec_B(\tau,j)$ form a finite Gr\"obner basis for $\syz(B)$ with respect to $<_B$.
\end{thm}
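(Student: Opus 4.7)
The plan is to mimic the classical proof of Schreyer's Theorem, substituting the OI-Factorization Lemma (\Cref{lemma:oifactorization}) wherever the classical argument requires one to line up two monomials within a single width.

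First, each $s_{i,j}^{\sigma,\tau}$ is a syzygy of $B$: applying $\phi$ to the defining expression \eqref{eq:syz} recovers $S(\Fb(\sigma)(b_i),\Fb(\tau)(b_j))$ minus its standard expression modulo $B$, which vanishes because $B$ is a Gr\"obner basis. Finiteness of the indexing set is immediate since $\Cc(B)$ is finite whenever $B$ is. It remains to prove the Gr\"obner basis property, i.e.\ that $\lm_{<_B}(h)$ is OI-divisible by $\lm_{<_B}(s_{i,j}^{\bar\sigma,\bar\tau})$ for some critical pair and some choice of $(\bar\sigma,i)\prec_B(\bar\tau,j)$, for every nonzero $h\in\syz(B)$.

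Fix such an $h$, of width $n$, and write $\lm_{<_B}(h)=u\varepsilon_{\sigma,i}$. Since $\phi(h)=0$ and each $b_k$ is monic, the leading monomial $u\cdot\sigma_*(\lm(b_i))$ of $\phi(u\varepsilon_{\sigma,i})$ must cancel in $\phi(h)$ against the image of at least one other term $c'u'\varepsilon_{\tau,j}$ appearing in $h$. This forces
\[
u\cdot\sigma_*(\lm(b_i))=u'\cdot\tau_*(\lm(b_j)),
\]
and in particular $\lm(b_i)$ and $\lm(b_j)$ involve the same basis element of $\Fb$ (applying $\sigma_*$ or $\tau_*$ only relabels the indexing map, not the summand). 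Moreover, since $u'\varepsilon_{\tau,j}<_B u\varepsilon_{\sigma,i}$ while their $\phi$-leading monomials agree, the very definition of $<_B$ forces $(\sigma,i)\prec_B(\tau,j)$.

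Now apply \Cref{lemma:oifactorization} to $(\sigma,\tau)$: there exist $\bar\sigma,\bar\tau$ and $\rho\in\hom(\ell,n)$ with $\sigma=\rho\circ\bar\sigma$, $\tau=\rho\circ\bar\tau$, and $\ell=|\im(\bar\sigma)\cup\im(\bar\tau)|$. Then $(\Fb(\bar\sigma)(b_i),\Fb(\bar\tau)(b_j))\in\Cc(B)$; by \Cref{lemma:equivariant-indices} we still have $(\bar\sigma,i)\prec_B(\bar\tau,j)$, so $s_{i,j}^{\bar\sigma,\bar\tau}$ lies in our set with $\lm_{<_B}(s_{i,j}^{\bar\sigma,\bar\tau})=m_{i,j}^{\bar\sigma,\bar\tau}\varepsilon_{\bar\sigma,i}$ by \Cref{lemma:lm-syzygy}. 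Using the multiplicative extension of \Cref{cor:equivariantspoly} to $\lcm$,
\[
\rho_*\bigl(m_{i,j}^{\bar\sigma,\bar\tau}\bigr)=\frac{\lcm(\sigma_*(\lm(b_i)),\,\tau_*(\lm(b_j)))}{\sigma_*(\lm(b_i))},
\]
which divides $u$ in $\Pb_n$ by the cancellation identity above. Hence $\Gb(\rho)\bigl(\lm_{<_B}(s_{i,j}^{\bar\sigma,\bar\tau})\bigr)$ classically divides $u\varepsilon_{\sigma,i}=\lm_{<_B}(h)$, establishing the required OI-divisibility.

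The main obstacle is the cancellation argument in the third paragraph: one must juggle three intertwined orders (the monomial order $<$ on $\Fb$, the index order $\prec_B$, and the induced order $<_B$ on $\Gb$) and read the definition of $<_B$ in reverse, extracting the key inequality $(\sigma,i)\prec_B(\tau,j)$ from the fact that $\phi(h)=0$ forces cancellation of the $\phi$-leading monomial.
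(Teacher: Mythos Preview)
Your proof is correct and follows essentially the same route as the paper's: verify each $s_{i,j}^{\sigma,\tau}$ is a syzygy, argue via cancellation in $\phi(h)=0$ that the $<_B$-leading term $u\varepsilon_{\sigma,i}$ of $h$ has a companion term $u'\varepsilon_{\tau,j}$ with matching $\phi$-leading monomial and hence $(\sigma,i)\prec_B(\tau,j)$, then apply the OI-Factorization Lemma and \Cref{lemma:lm-syzygy} to exhibit the OI-divisibility. The only cosmetic difference is that the paper orders all terms of $h$ and explicitly shows the \emph{second}-largest term works (its Equation~\eqref{eq:lmeq}), whereas you pick any cancelling term; both are fine. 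Your cancellation step is stated a bit tersely---it implicitly uses that $u\cdot\sigma_*(\lm(b_i))$ is the maximal monomial appearing in all of $\phi(h)$, so only \emph{leading} monomials of other $\phi$-images can meet it---but this is exactly what the paper's inequality chain~\eqref{eq:lmineq} makes explicit.
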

\begin{proof}
First note that each $s_{i,j}^{\sigma,\tau}$ is indeed a syzygy, for 
\[
\phi(s_{i,j}^{\sigma,\tau})=S(\Fb(\sigma)(b_i),\Fb(\tau)(b_j))-S(\Fb(\sigma)(b_i),\Fb(\tau)(b_j))=0.
\]
Now let $h\in\syz(B)$ be any nonzero syzygy. We need to show that $\lm (h)$ is divisible by the leading monomial of one of the $s_{i,j}^{\sigma,\tau}$ in the claimed Gr\"obner basis of $\syz (B)$. 

To see this write $h=\sum_{\l=1}^n c_\l x^{p_\l}\varepsilon_{\pi_\l,i_\l}$ for some nonzero $c_i\in K$ and distinct $x^{p_\l}\varepsilon_{\pi_\l,i_\l}\in\mon(\Gb)$. Note that $n\geq 2$ since $h$ is a nonzero syzygy. Assume the monomials of $h$ are ordered so that $x^{p_\l}\varepsilon_{\pi_\l,i_\l}>_B x^{p_{\l'}}\varepsilon_{\pi_{\l'},i_{\l'}}$ if $\l<\l'$. Then $\lt_{<_B}(h)=c_1x^{p_1}\varepsilon_{\pi_1,i_1}$ and, by  definition of $<_B$, we get
\begin{equation}
\label{eq:lmineq}
x^{p_1}\Fb(\pi_1)(\lm(b_{i_1}))\geq \cdots\geq x^{p_n}\Fb(\pi_n)(\lm(b_{i_n})).
\end{equation}
We claim that
\begin{equation}
\label{eq:lmeq}
x^{p_1}\Fb(\pi_1)(\lm(b_{i_1}))=x^{p_2}\Fb(\pi_2)(\lm(b_{i_2})).
\end{equation}
Indeed,  otherwise \eqref{eq:lmineq} becomes
\begin{equation}
\label{eq:newlmineq}
x^{p_1}  \Fb(\pi_1)(\lm(b_{i_1}))>x^{p_2}\Fb(\pi_2)(\lm(b_{i_2}))\geq \cdots\geq x^{p_n}\Fb(\pi_n)(\lm(b_{i_n})).
\end{equation}
Since $h$ is a syzygy we have $\sum_{\l=1}^n c_\l x^{p_\l}\Fb(\pi_\l)(b_{i_\l})=0$ which, using \eqref{eq:newlmineq}, implies that $x^{p_1}\Fb(\pi_1)(\lm(b_{i_1}))$ can be written as a $K$-linear combination of strictly smaller monomials. This is impossible, and hence we have established \eqref{eq:lmeq}. 

Since $x^{p_1} \varepsilon_{\pi_1,i_1} >_B x^{p_2} \varepsilon_{\pi_2,i_2}$
and these two elements are mapped by $\phi$ onto the  same element of $\Fb$ by \eqref{eq:lmeq}, the definition of $<_B$ implies 
\begin{equation}
   \label{eqn:lead}
(\pi_1,i_1)\prec_B(\pi_2,i_2). 
\end{equation}
%Note that $(\pi_1,i_1)\prec_B(\pi_2,i_2)$ by the definition of $<_B$. 
Now set $\alpha=x^{p_1}\Fb(\pi_1)(\lm(b_{i_1}))$. 
By Equation \eqref{eq:lmeq}, $\alpha$ is divisible by $\Fb(\pi_2)(\lm(b_{i_2}))$, and so $\alpha$ is divisible by 
$\beta =\lcm(\Fb(\pi_1)(\lm(b_{i_1})),\Fb(\pi_2)(\lm(b_{i_2})))$. By the OI-Factorization Lemma,  there are maps $\overline{\pi_1}\in\hom(w(b_{i_1}), \l)$, $\overline{\pi_2}\in\hom(w(b_{i_2}), \l)$ and $\rho\in\hom(\l,m)$ with $\l=|\im(\overline{\pi_1})\cup\im(\overline{\pi_2})|$ such that
\[
\rho\circ\overline{\pi_1}=\pi_1\quad\text{and}\quad\rho\circ\overline{\pi_2}=\pi_2.
\]
By construction,  $(\Fb(\overline{\pi_1})(b_{i_1}),\Fb(\overline{\pi_2})(b_{i_2}))$ is in $\mcal{C}(B)$.  Moreover, Relation \eqref{eqn:lead}  and Lemma \ref{lemma:equivariant-indices} imply 
$(\overline{\pi_1}, i_1)\prec_B(\overline{\pi_2},i_2)$. Hence, we get 
\begin{align*}
\frac{\alpha}{\beta}\Gb(\rho)(\lm_{<_B}(s_{i_1,i_2}^{\overline{\pi_1},\overline{\pi_2}}))&=\frac{\alpha}{\beta}\Gb(\rho)(m_{i_1,i_2}^{\overline{\pi_1},\overline{\pi_2}}\varepsilon_{\overline{\pi_1},i_1})\tag{by Lemma \ref{lemma:lm-syzygy}}\\
&=\frac{\alpha}{\beta}\frac{\lcm(\Fb(\pi_1)(\lm(b_{i_1})), \Fb(\pi_2)(\lm(b_{i_2})))}{\Fb(\pi_1)(\lm(b_{i_1}))}\varepsilon_{\pi_1,i_1}\tag{by  \Cref{cor:equivariantspoly}}\\
&=\frac{x^{p_1}\Fb(\pi_1)(\lm(b_{i_1}))}{\Fb(\pi_1)(\lm(b_{i_1}))}\varepsilon_{\pi_1,i_1}\\
&=x^{p_1}\varepsilon_{\pi_1,i_1}\\
&=\lm_{<_B}(h).
\end{align*}
This shows that $\lm_{<_B}(h)$ is OI-divisible by $\lm_{<_B}(s_{i_1,i_2}^{\overline{\pi_1},\overline{\pi_2}})$,  which completes the proof.
\end{proof}

In the graded case, the method can be modified to give homogenous syzygies. 

\begin{rem}
   \label{rem:graded case} 
If $\Pb$ and $\Fb$ are graded and the elements of $B$ are homogenous, we use the morphism   
\[
\phi \colon \Gb = \bigoplus_{i=1}^t \Fb^{\OI,w(b_i)}(-\deg(b_i))   \to\la B\ra_{\Fb}, \text{ defined by $\varepsilon_{\id_{[w(b_i)]},i}\mapsto b_i$}. 
\]
It is graded (of degree zero). One checks that as a consequence the syzygies defined in \eqref{eq:syz} are homogeneous.  
\end{rem}

If the generating set $B$ consists of monomials, then one gets a more explicit Gr\"obner basis of $\syz (B)$. The result is an OI-version of the description of the syzygy module of a monomial module in the classical situation (see, e.g, \cite[Lemma 15.1]{Eis95}). 

\begin{cor}

For any finite set of monomials $B \subset \Fb_n$, the elements $s_{i,j}^{\sigma,\tau} \in \Gb$ with $(\Fb(\sigma)(b_i),\Fb(\tau)(b_j))\in\Cc(B)$ and $(\sigma,i)\prec_B(\tau,j)$ form a finite Gr\"obner basis for $\syz(B)$ with respect to $<_B$, where 
\[
s_{i,j}^{\sigma,\tau}=\frac{\lcm(\Fb(\sigma)(b_i),\Fb(\tau)(b_j))}{\Fb(\sigma)(b_i)}\varepsilon_{\sigma,i}-\frac{\lcm(\Fb(\sigma)(b_i),\Fb(\tau)(b_j))}{\Fb(\tau)(b_j)}\varepsilon_{\tau,j}. 
\]
\end{cor}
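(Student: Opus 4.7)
The plan is to derive this corollary directly from the OI-Schreyer's Theorem by showing that everything simplifies dramatically when $B$ consists of monomials. First, I would verify that the hypotheses of \Cref{theorem:oi-schreyers-theorem} are satisfied for $B$: any set of monomials is monic (after normalization by scalars, which changes nothing essential), and $B$ is automatically a Gr\"obner basis of $\la B\ra_{\Fb}$ since $\lm(B) = B$ generates the monomial submodule $\la B\ra_{\Fb}$ itself, so that $\la \lm(\la B\ra_{\Fb})\ra_{\Fb} = \la B\ra_{\Fb} = \la \lm(B)\ra_{\Fb}$.

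Next, I would compute the S-polynomial $S(\Fb(\sigma)(b_i), \Fb(\tau)(b_j))$ when $b_i, b_j$ are monomials. Since $\lm(b_i) = b_i$ and $\lt(b_i) = b_i$, and similarly for $b_j$, we get
\[
S(\Fb(\sigma)(b_i), \Fb(\tau)(b_j)) = \frac{\lcm(\Fb(\sigma)(b_i), \Fb(\tau)(b_j))}{\Fb(\sigma)(b_i)} \Fb(\sigma)(b_i) - \frac{\lcm(\Fb(\sigma)(b_i), \Fb(\tau)(b_j))}{\Fb(\tau)(b_j)} \Fb(\tau)(b_j) = 0.
\]
Hence the trivial division expression witnesses that this S-polynomial reduces to zero modulo $B$, with every $a_{i,j,\l}^{\sigma,\tau} = 0$. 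Substituting into the defining formula \eqref{eq:syz}, the tail sum $\sum_\l a_{i,j,\l}^{\sigma,\tau} \varepsilon_{\pi_{i,j,\l}^{\sigma,\tau}, k_{i,j,\l}^{\sigma,\tau}}$ vanishes, leaving
\[
s_{i,j}^{\sigma,\tau} = m_{i,j}^{\sigma,\tau} \varepsilon_{\sigma, i} - m_{j,i}^{\tau,\sigma} \varepsilon_{\tau, j}.
\]
Since $\lm(b_i) = b_i$, the coefficients $m_{i,j}^{\sigma,\tau}$ are exactly $\lcm(\Fb(\sigma)(b_i), \Fb(\tau)(b_j))/\Fb(\sigma)(b_i)$, matching the formula stated in the corollary.

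Finally, I would invoke \Cref{theorem:oi-schreyers-theorem} to conclude that these elements form a finite Gr\"obner basis of $\syz(B)$ with respect to $<_B$. There is no substantial obstacle here: the only minor point to check carefully is that the simplified formula agrees with the definition in \eqref{eq:syz} once the tail collapses, and that $B$ really does qualify as a Gr\"obner basis of the monomial submodule it generates. Both are immediate from the definitions, so the corollary reduces to a specialization of the preceding theorem.
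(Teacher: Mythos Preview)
Your proposal is correct and follows essentially the same approach as the paper: both arguments verify that a set of monomials is automatically a Gr\"obner basis, observe that all S-polynomials vanish so the tail sum in \eqref{eq:syz} disappears, and then invoke \Cref{theorem:oi-schreyers-theorem}. Your write-up is simply more explicit about the intermediate computations than the paper's terse version.
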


\begin{proof}
Since $B$ consists of monomials, it is a Gr\"obner basis of $\langle B \rangle_{\Fb}$ with respect to any monomial order on $\Fb$. Thus, \Cref{theorem:oi-schreyers-theorem} is applicable. Moreover, each S-polynomial   $S(\Fb(\sigma)(b_i),\Fb(\tau)(b_j))$ is zero, which implies the stated form of each $s_{i,j}^{\sigma,\tau}$ (see Equation \eqref{eq:syz}).  We conclude by \Cref{theorem:oi-schreyers-theorem}. 
\end{proof}

%%%%%%%%%%%%%%%%%%%%%%%%%%%%%%%%%%%

\section{Computing Free Resolutions}
\label{section:resolution}
We now apply our results to the problem of computing free resolutions. We continue to use the above notation. 

Recall that if $\Mb$ is an $\Ab$-module, then a \emph{free resolution} of $\Mb$ is an exact sequence
\begin{equation*}
   \label{eq:free res}
\Fb^\bullet: \quad \cdots\to\Fb^2\to\Fb^1\to\Fb^0\to\Mb\to0, 
\end{equation*}
where each $\Fb^i$ is a free $\Ab$-module. If $\Mb$ is a finitely generated $\Pb$-module then  $\Mb$ admits a free resolution in which every free module $\Fb^i$ is finitely generated (see \cite[Theorem 7.1]{NR19}). The argument in \cite{NR19} is based on the finiteness results established in that paper and not constructive. Using the results of the previous section we obtain the first algorithm to compute, for 
any integer $p \ge 0$, the first $p$ steps of a free resolution of a submodule $\Mb$ of $\Mb$. Indeed, given a finite Gr\"obner basis  $B_0$ of $\Mb$, define $\phi_0 \colon \Fb^0\to\Mb$ by mapping the basis elements of $\Fb^0$ onto the corresponding generators in $B_0$. Now let $B_1$ be a generating set of $\ker(\phi_0)$ determined by \Cref{theorem:oi-schreyers-theorem}  and repeat this process.

%The existence of free resolutions is shown as follows. Take a generating set $B_0$ of $\Mb$, and let $\Fb^0=\bigoplus_{b\in B_0}\Fb^{\OI,w(b)}$. Define $\phi_0 \colon \Fb^0\to\Mb$ by mapping the basis elements of $\Fb^0$ onto the corresponding generators in $B_0$. Now let $B_1$ be the Gr\"obner basis  of $\ker(\phi^0)$ determined by  repeat this process. In this fashion one obtains a free resolution of $\Mb$. If we are working over $\Pb$, then each $\Fb^i$ can be taken to be finitely generated (see \cite[Theorem 7.1]{NR19}). Furthermore, if $\Mb$ is graded, then $\Fb^\bullet$ can be taken to be a graded free resolution, i.e. each $\Fb^i$ is a graded OI-module and each differential is graded of degree 0. We are interested in computing such resolutions. The idea is to iterate the OI-Schreyer's Theorem from the previous section.

\begin{proc}[Method for Computing Free Resolutions]
\label{proc:freeres}
Let $\Mb$ be a submodule of a free $\Pb$-module. Suppose $\Mb$ is finitely generated by a subset $B$. Compute a free resolution for $\Mb$ as follows:
\begin{enumerate}
\item Determine a finite Gr\"obner basis $B_0$ for $\Mb$ using the OI-Buchberger's Algorithm applied to the set $B$.
\item Apply the OI-Schreyer's Theorem to $B_0$ to obtain a finite Gr\"obner basis $B_1$ of $\syz(B_0)$.
\item Repeat (ii) as many times as desired to compute a finite Gr\"obner basis $B_n$ of $\syz(B_{n-1})$.
\item Form (the beginning of) the free resolution $\Fb^\bullet$ by defining $\Fb^i$ such  that $\phi_i$ maps the basis elements of $\Fb^i$ onto the corresponding generators in $B_i$ as in \Cref{def:phi}. 
\end{enumerate}
\end{proc}

Using \Cref{rem:graded case}, the above procedure can be modified in the case that $\Mb$ is a graded OI-module to produce a graded free resolution of $\Mb$. 
%The key ingredient is the fact that if $B$ is a collection of homogeneous elements, then the OI-Buchberger's Criterion computes a homogeneous Gr\"obner basis. Furthermore, it is not hard to show that the syzygies from \eqref{eq:syz} are homogeneous.
%Thus, if $\Mb$ is graded, we can modify step (iv) of Procedure \ref{proc:freeres} to read as follows: 
%
%\begin{enumerate}
%\item[(iv$^\prime$)] Form the graded free resolution $\Fb^\bullet$ by defining, for each $i$, the map
%\[
%\phi^i:\Fb^i=\bigoplus_{b\in B_i}\Fb^{\OI,w(b)}(-\deg(b_i))\to\la B_i\ra_{\Fb^{i-1}}
%\]
%via sending the basis elements of $\Fb^i$ to the corresponding generators in $B_i$.
%\end{enumerate}
%In this way, each $\phi^i$ is graded, so we produce a graded free resolution of $\Mb$.

If $\Mb$ is a graded submodule of $\Fb$, then it is shown in \cite[Theorem 3.10]{FN21} that $\Mb$ has a graded free resolution $\Fb^\bullet$ such that for any other graded free resolution $\Gb^\bullet$ and for any homological degree $d$, the rank of $\Fb^d$ is at most the rank of $\Gb^d$. By \cite[Theorem 3.10]{FN21}, the resolution 
$\Fb^\bullet$ is unique up to isomorphisms of graded free resolutions and called a 
\emph{minimal} graded free resolution of $\Mb$ in \cite{FN21}. Our methods allow us to show that there is an algorithm that produces such a minimal free resolution. 

As in the classical situation, a key is to characterize the maps occurring in a graded minimal free resolution. 

\begin{defn}[{\cite[Definition 3.1]{FN21}}]
Let $\Fb$ and $\Gb$ be free $\Pb$-modules with bases $\{f_1,\ldots,f_s\}$ and $\{g_1,\ldots,g_t\}$ where each $f_i$ lives in width $u_i$ and each $g_j$ lives in width $v_j$. A morphism $\phi \colon \Fb\to\Gb$ is determined by the images of the $f_i$, i.e.\  by $s$ expressions of the form
\[
\phi(f_i)=\sum_{\substack{1\leq j\leq t\\\epsilon_{i,j}\in\hom(v_j,u_i)}}a_{\epsilon_{i,j}}\Gb(\epsilon_{i,j})(g_j)
\]
where each $a_{\epsilon_{i,j}}\in\Pb_{u_i}$. We say $\phi$ is \emph{minimal} if whenever any $\epsilon_{i,j}$ is an identity map, the coefficient $a_{\epsilon_{i,j}}$ is not a unit. 
%A \emph{minimal complex} of free $\Pb$-modules is one where each map in the complex is minimal.
\end{defn}

This concept allows one to check whether a graded free resolution is minimal. Indeed, \cite[Theorem 3.10]{FN21} gives the following characterization:

\begin{thm}
A graded free resolution $\Fb^\bullet$ of $\Mb$ is minimal if and only if each map between free modules in $\Fb^\bullet$ is minimal.
%for any other graded free resolution $\Gb^\bullet$ and for any homological degree $d$, the rank of $\Fb^d$ is at most the rank of $\Gb^d$.
\end{thm}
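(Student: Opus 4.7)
My plan is to prove both implications by connecting non-minimality of a differential with the possibility of splitting off a trivial acyclic direct summand, which strictly decreases ranks in two consecutive homological degrees.

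For the forward direction I argue contrapositively. Suppose some $\phi^d \colon \Fb^d \to \Fb^{d-1}$ is not minimal. Then there are basis elements $f_i$ of $\Fb^d$ and $g_j$ of $\Fb^{d-1}$ of a common width $n$ such that
\[
\phi^d(f_i) = a g_j + h,
\]
where $a \in K$ is nonzero and $h \in (\Fb^{d-1})_n$ collects contributions from other summands and from non-identity OI-maps applied to $g_j$. I perform a graded OI-automorphism of $\Fb^{d-1}$ defined on basis elements by $g_j \mapsto a g_j + h$ and $g_k \mapsto g_k$ for $k \ne j$. This is an OI-automorphism: at every width $m \ge n$ the induced $\Pb_m$-linear map is block-triangular with the unit $a$ on the diagonal of the $g_j$-block, since $a \in K$ is fixed by every OI-morphism. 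After this change of basis, $\phi^d(f_i)$ equals the new basis element $g_j'$. I then split $\Fb^d$ and $\Fb^{d-1}$ along the summands generated by $f_i$ and $g_j'$, respectively, and replace the differentials with their projections onto the complementary summands. Using $\phi^{d-1} \circ \phi^d = 0$ and $\phi^d \circ \phi^{d+1} = 0$, one checks that these projected differentials compose to zero and that the resulting shorter complex is still a graded free resolution of $\Mb$. Since it has strictly smaller rank in degrees $d$ and $d-1$, the original $\Fb^\bullet$ cannot have been minimal in the rank sense.

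For the reverse direction, suppose every map in $\Fb^\bullet$ is minimal, and let $\Gb^\bullet$ be any graded free resolution of $\Mb$. Iteratively apply the splitting procedure above to $\Gb^\bullet$ as long as some map in it is non-minimal. Because each such step strictly decreases total rank in some degree, the process terminates and produces a graded free resolution $\Gb'^\bullet$ of $\Mb$ in which every map is minimal and $\rank(\Gb'^d) \le \rank(\Gb^d)$ for every $d$. By the uniqueness portion of \cite[Theorem 3.10]{FN21}, any two graded free resolutions of $\Mb$ with only minimal differentials are isomorphic, so $\Gb'^\bullet \cong \Fb^\bullet$, yielding $\rank(\Fb^d) \le \rank(\Gb^d)$. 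Hence $\Fb^\bullet$ is minimal.

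The main obstacle is verifying rigorously that the proposed change of basis on $\Fb^{d-1}$ is a graded OI-automorphism and that splitting off the resulting acyclic subcomplex preserves exactness throughout. Two observations keep this manageable: the scalar $a \in K$ is fixed by every OI-morphism, so the diagonal entry remains a unit at every width; and the correction term $h$ involves only basis elements of width at most $n$, so its image under any $\epsilon \colon [n] \to [m]$ lies in the expected part of the width-$m$ decomposition. Together these make the automorphism block-triangular with unit diagonal at each width, hence invertible, and the split-off acyclic piece is isomorphic in every width to an identity map between the relevant classical free modules.
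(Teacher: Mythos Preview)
The paper does not prove this statement; it simply records it as a quotation of \cite[Theorem 3.10]{FN21}. So there is no in-paper argument to compare your attempt against, only the reference.

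Your forward direction (argued contrapositively) is the standard pruning step and is essentially correct; it is exactly what \cite[Lemma 3.5]{FN21} does and what the present paper invokes in the proof of Theorem~\ref{thm:MFR}. Your check that $g_j \mapsto a g_j + h$ is a graded OI-automorphism is right: since $v_j = u_i = n$ and $\hom(n,n) = \{\id_{[n]}\}$, the element $h$ lies entirely in the summands indexed by $k \neq j$, so at every width the induced map is block triangular with diagonal blocks $a\cdot\id$ and $\id$.

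The reverse direction, however, is circular as written. You invoke ``the uniqueness portion of \cite[Theorem 3.10]{FN21}'' in the form \emph{any two graded free resolutions of $\Mb$ with only minimal differentials are isomorphic}. But the uniqueness actually stated (and recounted just before this theorem in the paper) is that \emph{rank-minimal} resolutions are unique up to isomorphism. These two uniqueness assertions coincide only once one knows that ``all maps minimal'' is equivalent to ``rank-minimal'', which is precisely the theorem you are proving. Concretely: from (i) rank-minimal resolutions exist and (ii) rank-minimal resolutions are unique, you cannot deduce that your $\Fb^\bullet$ and $\Gb'^\bullet$ (both with minimal differentials, neither yet known to be rank-minimal) are isomorphic without already using the equivalence. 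To close this gap you would need an independent argument that a comparison chain map between two resolutions with minimal differentials, lifting $\id_{\Mb}$, is an isomorphism in each width; that comparison argument is the actual content of the relevant part of \cite{FN21} and cannot be cited as a black box here.
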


We are ready to establish that 
the first steps of a graded minimal free resolution can be computed algorithmically. 

\begin{thm}
   \label{thm:MFR}
If $\Mb$ is a graded submodule of $\Fb$ with a finite generating set $B$ consisting of homogenous elements, then, for any integer $p \ge 0$, there is a finite algorithm that determines the first $p$ steps 
\[
\Fb^p \to \cdots\to\Fb^2\to\Fb^1\to\Fb^0\to\Mb\to0  
\]
 in a graded minimal free resolution of $\Mb$. 
\end{thm}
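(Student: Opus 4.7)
The plan is to first compute an arbitrary graded free resolution of length $p$ using the machinery of \Cref{section:schreyer}, and then to reduce it to a graded minimal free resolution by iteratively canceling ``trivial'' summands of the differentials. This is the OI-analogue of the classical minimization procedure for free resolutions over a polynomial ring.

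First, apply the graded version of \Cref{proc:freeres} (as described in \Cref{rem:graded case}) to the homogeneous set $B$: a single run of \Cref{algorithm:buchberger} produces a finite graded Gr\"obner basis $B_0$ of $\Mb$, and $p$ iterations of the graded form of \Cref{theorem:oi-schreyers-theorem} then give finitely generated graded free OI-modules $\Fb^0,\ldots,\Fb^p$ together with graded OI-linear maps $\phi_i\colon\Fb^i\to\Fb^{i-1}$ fitting into an exact sequence ending in $\Mb$. This is the first $p$ steps of a (not necessarily minimal) graded free resolution; every $\Fb^i$ has finite rank by \Cref{thm:finite compute G} and \Cref{theorem:oi-schreyers-theorem}.

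Second, minimize the resolution as follows. Using the definition of a minimal morphism, one can decide in finite time whether each $\phi_i$ is minimal by inspecting its finite description through the chosen bases. If $\phi_i$ is not minimal, there exist a basis element $f$ of $\Fb^i$ and a basis element $g$ of $\Fb^{i-1}$ of common width $d$, together with a unit $u\in K^{\times}$, such that $\phi_i(f)=u\,g+(\text{other terms})$. Set $g':=u^{-1}\phi_i(f)$ and replace $g$ by $g'$ in the basis of $\Fb^{i-1}$. Then $\phi_i$ carries the OI-submodule $\Fb^{\OI,d}\cdot f$ isomorphically (up to the unit $u$) onto $\Fb^{\OI,d}\cdot g'$, so the contractible two-term subcomplex spanned by $f$ and $g'$ is a graded OI-direct summand of $\Fb^\bullet$. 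Splitting it off and updating $\phi_{i+1}$ and $\phi_{i-1}$ by the induced projection and restriction yields a new graded free resolution of $\Mb$ of strictly smaller total rank. Since $\sum_{i=0}^{p}\rank(\Fb^i)$ is finite to begin with and decreases by two with each such reduction, the process terminates. At termination every $\phi_i$ is minimal, so by the characterization of minimal graded free resolutions quoted just before the statement (\cite[Theorem 3.10]{FN21}) the output is a graded minimal free resolution of $\Mb$ in homological degrees $0,\ldots,p$.

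The main obstacle is verifying that the cancellation step is genuinely OI-equivariant, namely that $\{g_l\,:\,l\neq\text{index of }g\}\cup\{g'\}$ is again a basis of the graded free OI-module $\Fb^{i-1}$ and that $\phi_i$ identifies the OI-orbits of $f$ and $g'$ up to the unit $u$. The crucial fact enabling this is $\hom([d],[d])=\{\id_{[d]}\}$: $g$ can appear in $\phi_i(f)$ only with the identity OI-morphism, so the substitution $g\mapsto g'$ specializes in every width $n\geq d$ to an elementary triangular change of basis with unit diagonal, yielding a genuine OI-direct sum decomposition $\Fb^{i-1}\cong\Fb^{\OI,d}\cdot g'\oplus\Fb'^{i-1}$. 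Once this compatibility is in hand, the splitting off of the contractible summand and the iteration described above complete the argument.
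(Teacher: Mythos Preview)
Your proof is correct and follows essentially the same strategy as the paper's own argument: first build a graded free resolution of length $p$ via the graded form of \Cref{proc:freeres}, then prune non-minimal differentials until all maps are minimal and invoke \cite[Theorem 3.10]{FN21}. The only difference is that the paper outsources the pruning step entirely to \cite[Lemma 3.5]{FN21}, whereas you spell it out---in particular, your observation that $\hom([d],[d])=\{\id_{[d]}\}$ forces the basis change $g\mapsto g'$ to be block-triangular with unit diagonal in every width is exactly the OI-specific point needed to justify the cancellation.
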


\begin{proof}
Using the graded version of Procedure \ref{proc:freeres}, we compute a graded exact sequence 
\[
\Fb^p \to \cdots\to\Fb^2\to\Fb^1\to\Fb^0\to\Mb\to0, 
\]
where each $\Fb^i$ is a finitely generated graded free $\Pb$-module. If one of the maps in this sequence is not minimal we prune the sequence to obtain an exact sequence such that two consecutive free modules are replaced by free modules of strictly smaller rank. Repeating this process as often as needed beginning with the right-most map one eventually obtains (the beginning) of a graded minimal free resolution of $\Mb$. 

The pruning is described in detail in the proof of  \cite[Lemma 3.5]{FN21}. 
%One computes the minimal free resolution of a graded module $\Mb$ by first applying the graded version of Procedure \ref{proc:freeres}, and then pruning any maps that are nonminimal. Specifically, the proof of \cite[Lemma 3.5]{FN21} gives a recipe for splitting off trivial complexes to minimize each map in a resolution.   
\end{proof}

\Cref{proc:freeres} and the algorithm in \Cref{thm:MFR} have been implemented in \cite{M}.

\begin{ex}\mbox{}
\begin{enumerate}
\item Let $\Fb=\Fb^{\OI,1}\oplus\Fb^{\OI,2}$ have basis $\{e_{\id_{[1]},1},e_{\id_{[2]},2}\}$. Let $c=2$, that is, $\Pb$ has two rows of variables, and consider the submodule $\Mb$ of $\Fb$ generated by the single element $f=x_{1,2}x_{1,1}e_{\pi,1}+x_{2,2}x_{2,1}e_{\rho,2}\in\Fb_3$ where $\pi:[1]\to[3]$ is given by $1\mapsto 2$ and $\rho:[2]\to[3]$ is given by $1\mapsto 1$ and $2\mapsto3$.\par The first step in resolving $\Mb$ is to compute a Gr\"obner basis for $\Mb$ with respect to the lex order. Unlike in the classical setting, the one-element set $\{f\}$ does \emph{not} form a Gr\"obner basis for $\Mb$. In fact a minimal
%\footnote{Minimal with respect to the Gr\"obner basis property.} 
Gr\"obner basis for $\Mb$ can be computed using the Macaulay2 script in \cite{M}:
\begin{align*}
x_{1,2}x_{1,1}e_{\pi,1}+x_{2,2}x_{2,1}e_{\rho,2}&\in\Fb_3\\
-x_{2,2}x_{2,1}e_{\sigma,2}+x_{2,2}x_{2,1}e_{\tau,2}&\in\Fb_4\\
-x_{2,3}x_{2,2}x_{1,1}e_{\alpha,2}+x_{2,3}x_{2,1}x_{1,2}e_{\sigma,2}&\in\Fb_4
\end{align*}
where the maps $\sigma,\tau,\alpha$ are given as
\[
\begin{minipage}{2cm}
\begin{align*}
\sigma:[2]&\to[4]\\
1&\mapsto1\\
2&\mapsto4
\end{align*}
\end{minipage}
\hspace{1cm}
\begin{minipage}{2cm}
\begin{align*}
\tau:[2]&\to[4]\\
1&\mapsto1\\
2&\mapsto3
\end{align*}
\end{minipage}
\hspace{1cm}
\begin{minipage}{2cm}
\begin{align*}
\alpha:[2]&\to[4]\\
1&\mapsto2\\
2&\mapsto4.
\end{align*}
\end{minipage}
\]
The script in \cite{M} then implements Theorem \ref{thm:MFR} to obtain the beginning of the minimal resolution of $\Mb$:
\[
\cdots \to \Fb^4\to\Fb^3\to\Fb^2\to\Fb^1\to\Fb^0\to\Mb\to 0
\]
where
\begin{align*}
\rank(\Fb^0)&=1\\
\rank(\Fb^1)&=2\\
\rank(\Fb^2)&=5\\
\rank(\Fb^3)&=9\\
\rank(\Fb^4)&=14.
\end{align*}
Finally, $\Fb^0$ is generated in width $3$, and if $1 \le  k \le 4$, then $\Fb^k$ has its generators in width $k+4$.
\item Let $\Fb=\Fb^{\OI,1}\oplus\Fb^{\OI,1}$ have basis $\{e_{\id_{[1]},1},e_{\id_{[1]},2}\}$ and let $c=2$ so that $\Pb$ has two rows of variables. Consider the submodule $\Nb$ of $\Fb$ generated by $x_{1,2}x_{1,1}e_{\pi,1}+x_{2,2}x_{2,1}e_{\rho,2}$,  where $\pi:[1]\to[2]$ is given by $1\mapsto2$ and $\rho:[1]\to[2]$ is given by $1\mapsto1$. As in the previous example, the Macaulay2 script in \cite{M} computes the beginning of the minimal resolution of $\Nb$:
\[
\cdots \to \Fb^4\to\Fb^3\to\Fb^2\to\Fb^1\to\Fb^0\to\Nb\to 0
\]
where
\begin{align*}
\rank(\Fb^0)&=1\\
\rank(\Fb^1)&=1\\
\rank(\Fb^2)&=2\\
\rank(\Fb^3)&=3\\
\rank(\Fb^4)&=4.
\end{align*}
Here, $\Fb^0$ is generated in width $2$, and if $1 \le  k \le 4$, then $\Fb^k$ has its generators in width $k+3$.
\end{enumerate}
\end{ex}

Finally, we remark that any free resolution $\Fb^\bullet$ of $\Mb$ induces, for any integer $w\ge 0$, a free resolution $\Fb^\bullet_w$ of $\Mb_w$ by restricting each differential width-wise. Thus, if we think of $\Mb$ as a sequence $(\Mb_n)_n$ of related modules, our techniques ``simultaneously'' compute a free resolution for each $\Pb_n$-module $\Mb_n$ in the sequence. Even if the resolution $\Fb^\bullet$ was minimal, its width $w$ component $\Fb^\bullet_w$ is not necessarily a minimal resolution of $\Mb_n$ over $\Pb_n$. However, if necessary, $\Fb^\bullet_w$ can be pruned to get a minimal free resolution of $\Mb_n$. Indeed, it is straightforward to adapt the pruning method of \cite{FN21} to this case. In fact, it becomes considerably simpler. 
\medskip

%%%%%%%%%%%%%%%%%%%

\end{document}